\theoremstyle{theorem}
\newtheorem{theorem}{Theorem}[section]
\newtheorem{proposition}[theorem]{Proposition}
\newtheorem{lemma}[theorem]{Lemma}
\newtheorem{question}[theorem]{Question}
\newtheorem{corollary}[theorem]{Corollary}
\newtheorem*{GPRC}{Generalized Property R Conjecture}
\newtheorem*{SGPRC}{Stable Generalized Property R Conjecture}
\theoremstyle{definition}
\newtheorem{examples}[theorem]{Examples}
\newcommand{\CP}{\mathbb{CP}}
\newcommand{\Tt}{\mathcal T}
\newcommand{\Ss}{\mathcal S}
\newcommand{\wh}[1]{\widehat{#1}}
\newcommand{\A}{\alpha}
\newcommand{\n}{\beta}
\newcommand{\g}{\gamma}
\newcommand{\pd}{\partial}
\newcommand{\emp}{\emptyset}
\newcommand{\X}{\times}
\newcommand{\Aa}{\alpha^+}
\newcommand{\Nn}{\beta^+}
\newcommand{\Gg}{\gamma^+}
\def\@seccntformat#1{%
  \protect\textup{\protect\@secnumfont
    \ifnum\pdfstrcmp{subsection}{#1}=0 \bfseries\fi% subsection # in \bfseries
    \csname the#1\endcsname
    \protect\@secnumpunct
  }%
}  
\newtheorem*{rep@theorem}{\rep@title}
\newcommand{\newreptheorem}[2]{%
\newenvironment{rep#1}[1]{%
 \def\rep@title{#2 \ref{##1}}%
 \begin{rep@theorem}}%
 {\end{rep@theorem}}}
\begin{document}

\rhead{\thepage}
\lhead{\author}
\thispagestyle{empty}

%\tableofcontents
%\listoffigures

\raggedbottom
\pagenumbering{arabic}
\setcounter{section}{0}

%%%%%%%%%%%%%%%%%%%%%%%%%%%%%%%%%%%%%%%%%%%%%%%%%%%%%%%%
%%%%%%%%%%%%%%%%%%%%%%%%%%%%%%%%%%%%%%%%%%%%%%%%%%%%%%%%
%%%%%%%%%%%%%%%%%%%%%%%%%%%%%%%%%%%%%%%%%%%%%%%%%%%%%%%%

\title{Characterizing Dehn surgeries on links via trisections}
\date{\today}

\author{Jeffrey Meier}
\address{Department of Mathematics, Indiana University, 
Bloomington, IN 47408}
\email{jlmeier@indiana.edu}
\urladdr{http://pages.iu.edu/~jlmeier} 

\author{Alexander Zupan}
\address{Department of Mathematics, University of Nebraska-Lincoln, Lincoln, NE 68588}
\email{zupan@unl.edu}
\urladdr{http://www.math.unl.edu/~azupan2}

\begin{abstract}
	We summarize and expand known connections between the study of Dehn surgery on links and the study of trisections of closed, smooth 4-manifolds.  In addition, we describe how the potential counterexamples to the Generalized Property R Conjecture given by Gompf, Scharlemann, and Thompson yield genus four trisections of the standard four-sphere that are unlikely to be standard.  Finally, we give an analog of the Casson-Gordon Rectangle Condition for trisections that can be used to obstruct reducibility of a given trisection.
\end{abstract}

\maketitle

%%%%%%%%%%%%%%%%%%%%%%%%%%%%%%%%%%%%%%%%%%%%%%%%%%%%%%%%
%%%%%%%%%%%%%%%%%%%%%%%%%%%%%%%%%%%%%%%%%%%%%%%%%%%%%%%%
\section{Outline}\label{sec:outline}
%%%%%%%%%%%%%%%%%%%%%%%%%%%%%%%%%%%%%%%%%%%%%%%%%%%%%%%%
%%%%%%%%%%%%%%%%%%%%%%%%%%%%%%%%%%%%%%%%%%%%%%%%%%%%%%%%

The purpose of this note is to use both new and existing results to make clear the significant role of the trisection theory of smooth 4-manifolds in the classification of Dehn surgeries on links.  The theory of Dehn surgery on knots has been thoroughly developed over the past forty years.  In general, this research has focused on two major questions:  First, which manifolds can be obtained by a surgery on a knot in a given manifold $Y$?  Second, given a pair of manifolds $Y$ and $Y'$, for which knots $K \subset Y$ does there exist a surgery to $Y'$?  These two questions have contributed to the growth of powerful tools in low-dimensional topology, such as sutured manifold theory, the notion of thin position, and Heegaard Floer homology.  For example, over the last 15 years, the Heegaard Floer homology theories of Ozsv\'ath and Szab\'o have dramatically deepened our collective understanding of Dehn surgeries on knots (see, for instance,~\cite{Ozsvath-Szabo_Lectures_2006}).

If we replace the word ``knot" with ``link" in the preceding paragraph, the situation changes significantly; for example, the classical Lickorish-Wallace Theorem asserts that every 3-manifold $Y$ can be obtained by surgery on a link in $S^3$~\cite{Lickorish_A-representation_1962,Wallace_Modifications_1960}. For the second general question, concerning which links in a given 3-manifold $Y$ yield a surgery to another given 3-manifold $Y'$, we observe the following basic fact:  Two framed links that are handleslide equivalent surger to the same 3-manifold~\cite{Kirby_A-calculus_1978}.  Thus, surgery classification of links is necessarily considered up to handleslide equivalence, and tools which rely on the topology of a knot exterior $S^3 \setminus \nu(K)$ are not nearly as useful, since handleslides can significantly alter this topology.

Understanding link surgeries in particular 3-manifolds is intimately connected to smooth 4-manifold topology.  Every smooth 4-manifold $X$ can be described by a handle decomposition, characterized by the attaching link $L$ for the 2-handles, which is contained in the boundary of the union of the 0- and 1-handles.  In other words, $X$ is associated to a framed link $L \subset  \#^k(S^1 \X S^2)$ such that Dehn surgery on $L$ yields $\#^{k'} (S^1 \X S^2)$.  Conversely, such a link $L$ (which we will call \emph{admissible}) describes a handle decomposition of a smooth 4-manifold, which we denote $X_L$.  Thus, classifying all such surgeries would be equivalent to classifying all smooth 4-manifolds.

Clearly, this is an insurmountable task, but to make the problem more tractable, we consider various restrictions placed on the parameters $k$, $k'$, and $n$, where $n$ represents the number of components of the link $L$.  For example, let $k = k' = 0$.  In the case that $n=1$, Gordon and Luecke proved that knots are determined by their complements, and thus the only knot in $S^3$ that admits an integral $S^3$ surgery (a \emph{cosmetic surgery}) is a $\pm 1$-framed unknot~\cite{Gordon-Luecke_Complements_1989}. In this paper, we will describe the proof of the following theorem from~\cite{Meier-Zupan_Genus-two_2017}.

\begin{theorem}\label{main1}
If $L \subset S^3$ is a two-component link with tunnel number one with an integral surgery to $S^3$, then $L$ is handleslide equivalent to a 0-framed Hopf link or $\pm 1$-framed unlink.
\end{theorem}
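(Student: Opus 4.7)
My plan is to (i) construct a genus-two trisection of $X_L$ from the tunnel data, (ii) invoke the authors' classification of genus-two trisections, and (iii) identify $L$ from the resulting standard trisection diagram.

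First, the tunnel-number-one hypothesis produces a genus-two Heegaard splitting $S^3 = H_1 \cup_{\Sigma} H_2$, where $H_1 := \nu(L \cup \tau)$ is a genus-two handlebody. After an isotopy I may arrange that $L$ lies on $\Sigma$ as a pair of disjoint essential simple closed curves. Since the integral surgery on $L$ yields $S^3$ and $H_2$ is untouched, the surgered region $H_1'$ is forced to be a genus-two handlebody re-gluing to $H_2$ along $\Sigma$ to recover $S^3$. Combining the two Heegaard splittings on either side of the surgery cobordism with two $4$-balls at top and bottom, I organize $X_L$ into three $4$-dimensional $1$-handlebodies meeting pairwise in the handlebodies $H_1$, $H_1'$, $H_2$ and triply in $\Sigma$. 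This is a genus-two trisection of $X_L$.

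Next, I apply the theorem of \cite{Meier-Zupan_Genus-two_2017} that every genus-two trisection is diffeomorphic to a standard one. Hence $X_L$ is diffeomorphic, with its standard smooth structure, to one of $S^4$, $\#^2\CP^2$, $\#^2\overline{\CP^2}$, $\CP^2 \# \overline{\CP^2}$, or $S^2 \X S^2$. Because $X_L$ is built from a $0$-handle, two $2$-handles, and a $4$-handle, we have $H_2(X_L) \cong \Z^2$, excluding $S^4$. Each of the remaining four manifolds admits a well-known surgery description by a two-component link in $S^3$: the $0$-framed Hopf link yields $S^2 \X S^2$, and the $\pm 1$-framed unlinks (with sign choices $(+,+),(+,-),(-,-)$) yield $\#^2\CP^2$, $\CP^2 \# \overline{\CP^2}$, and $\#^2\overline{\CP^2}$ respectively. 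The rigidity of the standard genus-two trisection diagrams pins down the attaching curves on $\Sigma$ up to handleslide, forcing $L$ to be handleslide equivalent to one of these links.

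The principal obstacle is the trisection construction itself: one must verify that the Heegaard position of $L$, together with the integer-framed surgery, really does assemble into three $1$-handlebody sectors with $\Sigma$ as triple intersection, rather than merely a handle decomposition with too little symmetry. A secondary but essential point is strengthening ``$X_L$ is standard as a $4$-manifold'' to ``the surgery diagram is standard up to handleslide''; this upgrade is what the explicit, concrete form of the standard genus-two trisection diagrams delivers, and without it the theorem would reduce to the still-open Generalized Property R Conjecture for two-component links.
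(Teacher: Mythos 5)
Your proposal follows essentially the same path as the paper's own argument: convert the tunnel-number-one data into a genus-two trisection $\Tt(L,\Sigma)$ of $X_L$ (Lemma~\ref{lem:constr}), invoke the classification that every genus-two trisection is standard (Theorem~\ref{thm:g2standard}), and read off $L$ up to handleslide from the standard $(2;0)$-trisection diagram. You also correctly isolate the crucial upgrade---from ``$X_L$ is diffeomorphic to a standard manifold'' to ``the $\gamma$ curves, viewed as a framed link isotopic to $L$ in $S^3 = H_\A \cup H_\n$, are handleslide-equivalent to the standard $\gamma'$''---which is exactly the point the paper leans on.
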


Another significant case occurs when $k = 0$ and $k' = n$.  In other words, we wish to understand $n$-component links in $S^3$ with surgeries to $\#^n (S^1 \X S^2)$.  We call such a link $L$ an \emph{R-link}, noting that R-links correspond precisely to the collection of geometrically simply-connected homotopy 4-spheres, i.e. homotopy 4-spheres built without 1-handles.  The Generalized Property R Conjecture (GPRC), Kirby Problem 1.82~\cite{Kirby_Problems_1978}, contends that every R-link is handleslide equivalent to a 0-framed unlink.  The conjecture is known to be true in the case $n=1$ via Gabai's proof of Property R~\cite{Gabai_FoliationsIII_1987}.  In~\cite{Meier-Schirmer-Zupan_Classification_2016}, the authors, in collaboration with Trent Schirmer, proved a stable version of the GPRC for a class of links.

\begin{theorem}\label{main2}
If $L \subset S^3$ is an $n$-component R-link with tunnel number $n$, then the disjoint union of $L$ with a 0-framed unknot is handleslide equivalent to a 0-framed unlink.
\end{theorem}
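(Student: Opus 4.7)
The plan is to translate the tunnel-number hypothesis into a Heegaard splitting of the surgered $3$-manifold of controlled genus, invoke the Waldhausen--Haken classification to identify this splitting with the standard one, and extract the desired handleslides from the identification. Since $L$ has tunnel number $n$, there exist $n$ tunnel arcs $\tau_1,\dots,\tau_n$ in $E(L)$ such that $H := \nu(L \cup \tau)$ is a genus-$(n+1)$ handlebody in $S^3$, yielding a Heegaard splitting $S^3 = H \cup_\Sigma H'$ in which $L$ lies in the spine of $H$. Integral Dehn surgery on $L$ takes place inside $\nu(L) \subset H$; it leaves $H'$ unchanged and converts $H$ into a new genus-$(n+1)$ handlebody $H_L$, producing a Heegaard splitting $\#^n(S^1 \X S^2) = H_L \cup_\Sigma H'$ of genus $n+1$.

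By Haken's lemma and the Waldhausen classification of Heegaard splittings of $\#^n(S^1 \X S^2)$, every genus-$(n+1)$ Heegaard splitting of this manifold is isotopic to the Heegaard splitting obtained by a single stabilization of the standard genus-$n$ splitting; in particular, $(H_L, H')$ admits a canceling pair of disks $(D_L, D')$, with $D_L \subset H_L$, $D' \subset H'$, and $|\partial D_L \cap \partial D'| = 1$. Since $D' \subset H' \subset S^3$, the curve $U := \partial D'$ is an unknot in $S^3$ disjoint from $L$, and its surface framing on $\Sigma$ agrees with its $0$-framing because $D'$ is a disk in $S^3$ bounded by $U$. We take $U$ to be the auxiliary $0$-framed unknot. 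The isotopy identifying $(H_L, H')$ with the standard stabilized splitting carries the spine of $H_L$ to the standard spine of the stabilized handlebody, namely $n$ meridians of the $S^1 \X S^2$-summands together with a further arc realizing the stabilization. Pulling this back through the Dehn surgery on $L$ and comparing with the analogous surgery presentation of $\#^{n+1}(S^1 \X S^2)$ coming from the standard Heegaard splitting converts $L \sqcup U$ (with its inherited framings) into the $0$-framed $(n+1)$-component unlink via a sequence of handleslides.

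The main obstacle is this final translation step: upgrading ``two Heegaard splittings are isotopic'' to ``the associated framed surgery links are handleslide equivalent''. The key input is the dictionary between systems of compressing disks for a Heegaard handlebody and surgery descriptions: any two maximal systems of disjoint compressing disks for a handlebody differ by a sequence of disk slides, which correspond precisely to handleslides on the surgery link. Framings cause no additional difficulty because all relevant curves lie on $\Sigma$ and bound disks in one of the handlebodies, so their surface framings match the $0$-framings coming from the bounding disks.
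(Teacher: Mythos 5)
Your setup is correct through the construction of the genus-$(n+1)$ Heegaard splitting $\#^n(S^1\times S^2)=H_L\cup_\Sigma H'$ and the appeal to Waldhausen's theorem to conclude it is the once-stabilized standard splitting. The genuine gap is the ``final translation step'' you flag yourself: none of this produces a handleslide equivalence of framed links in $S^3$. After sliding a cut system $\gamma$ for $H_L$ into standard position relative to a cut system $\alpha$ for $H'$, you know that $n$ of the $\gamma$-curves coincide with $\alpha$-curves and hence bound disjoint disks in $H'\subset S^3$, giving an $n$-component $0$-framed unlink; but the last curve $\gamma_{n+1}$ is only known to bound a disk in the \emph{surgered} handlebody $H_L$, and whether it is a $0$-framed unknot split from the rest in $S^3$ depends on its relation to a cut system $\beta$ for the \emph{unsurgered} handlebody $H$. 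That third cut system never enters your argument, and the ambient isotopy of $\#^n(S^1\times S^2)$ you invoke cannot be ``pulled back through the Dehn surgery'' to say anything about it. A reliable symptom of the problem: the tunnel-number-$n$ hypothesis is used only to fix the number of stabilizations, so if this reasoning were sound it would, verbatim with more auxiliary unknots, prove the full Stable Generalized Property~R Conjecture, which is open.

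The paper closes exactly this gap by keeping all three handlebodies in play: the admissible pair $(L,\Sigma)$ determines a $(n+1;0,1,n)$-trisection $\Tt(L,\Sigma)$ with spine $H'\cup H\cup H_L$, and the decisive input is Theorem~\ref{thm:msz} from~\cite{Meier-Schirmer-Zupan_Classification_2016}, which classifies every $(g;0,1,g-1)$-trisection as a standard trisection of $S^4$. This is genuinely stronger than applying Waldhausen's theorem to each of the three boundary Heegaard splittings separately: it furnishes a single isotopy of $\Sigma$ together with handleslides within $\alpha$, $\beta$, and $\gamma$ that standardize the triple simultaneously, which is precisely what is needed to control where $\gamma_{n+1}$ sits in $S^3$. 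Lemma~\ref{lem:stdslide} then reads off a chain of handleslides on $\Sigma$ carrying $\gamma=L\sqcup U_1$ to a $0$-framed unlink in $S^3=H_\alpha\cup H_\beta$. Your proposal omits the middle handlebody and substitutes Waldhausen's theorem for Theorem~\ref{thm:msz}; that substitution is exactly what the proof cannot survive.
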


As foreshadowed above, the proofs of these theorems are 4-dimensional in nature, utilizing a prominent new tool: \emph{trisections} of smooth 4-manifolds.  A trisection is a decomposition of a 4-manifold $X$ into three simple pieces, a 4-dimensional version of a 3-dimensional Heegaard splitting.  Elegantly connecting the two theories, Gay and Kirby proved that every smooth 4-manifold admits a trisection, and every pair of trisections for a given 4-manifold have a common stabilization~\cite{Gay-Kirby_Trisecting_2016}, mirroring the Reidemeister-Singer Theorem~\cite{Reidemeister_Zur-dreidimensionalen_1933, Singer_Three-dimensional_1933} in dimension three.

Unlike Heegaard splittings, however, the stabilization operation of Gay and Kirby can be broken into three separate operations, called \emph{unbalanced stabilizations} of types 1, 2, and~3~\cite{Meier-Schirmer-Zupan_Classification_2016}.  A trisection is said to be \emph{standard} if it is an unbalanced stabilization of the genus zero trisection of $S^4$, and thus, every trisection of $S^4$ becomes standard after some number of Gay-Kirby stabilizations.
 
In Section~\ref{sec:R-stab}, we describe a process by which an R-link $L$ paired with an \emph{admissible} Heegaard surface $\Sigma$ for its exterior is converted to a trisection $\Tt(L,\Sigma)$ of the 4-manifold $X_L$.  The new main result of this paper is a technical theorem that connects R-links to properties of these trisections.  The terms \emph{$\{2\}$--standard and $\{2,3\}$--standard} refer to trisections that become standard after allowing restricted types of unbalanced stabilizations; we will postpone the rigorous definitions for now.

\begin{theorem}\label{thm:equiv}
Suppose $L$ is an R-link and $\Sigma$ is any admissible surface for $L$.
{\begin{enumerate}
\item If $L$ satisfies the GPRC, then $\Tt(L,\Sigma)$ is $\{2\}$--standard.
\item The link $L$ satisfies the Stable GPRC if and only if $\Tt(L,\Sigma)$ is $\{2,3\}$--standard.
\end{enumerate}}
\end{theorem}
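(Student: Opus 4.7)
The plan is to prove both statements by building a dictionary between link-level operations on the pair $(L,\Sigma)$ and trisection-level moves on $\Tt(L,\Sigma)$. The three key entries in this dictionary are: (a) a handleslide of $L$ (along with any compatible alteration of the admissible surface $\Sigma$) modifies $\Tt(L,\Sigma)$ in a way realizable by an isotopy plus finitely many type-$2$ unbalanced stabilizations; (b) the disjoint union of $L$ with a $0$-framed unknot, paired with an appropriately enlarged admissible surface, yields a trisection obtained from $\Tt(L,\Sigma)$ by a single type-$3$ unbalanced stabilization; and (c) when $L$ is the $0$-framed unlink and $\Sigma$ is its canonical admissible surface, $\Tt(L,\Sigma)$ is already (possibly a stabilization of) the standard genus-zero trisection of $S^4$. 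Before attacking either statement, entry (a) is used to show that the property of being $\{2\}$-standard or $\{2,3\}$-standard is independent of the chosen admissible surface, so that it suffices to work with a convenient $\Sigma$.

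For part~(1), I would first verify entry (c) by direct inspection of the construction of $\Tt$ on the $0$-framed unlink. Combining entries (a) and (c) then completes the argument: if $L$ satisfies the GPRC, a sequence of handleslides converts $L$ into the $0$-framed unlink, and the dictionary translates this sequence into a combination of isotopies and type-$2$ stabilizations that render $\Tt(L,\Sigma)$ standard.

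For part~(2), the forward implication follows from entries (a), (b), (c) together with part~(1): the Stable GPRC furnishes a $0$-framed unlink $U$ such that $L \sqcup U$ is handleslide equivalent to a $0$-framed unlink, so $\Tt(L \sqcup U, \Sigma')$ is $\{2\}$-standard by part~(1); since $\Tt(L \sqcup U, \Sigma')$ differs from $\Tt(L,\Sigma)$ by type-$3$ stabilizations via entry (b), this yields $\{2,3\}$-standardness. The reverse implication requires reading the stabilizations off backward: starting from a $\{2,3\}$-standardization of $\Tt(L,\Sigma)$, the type-$3$ stabilizations must be recognized as adjunctions of $0$-framed unknots, and the remaining type-$2$ stabilizations together with the terminal standard trisection must be decoded as a handleslide-equivalence of $L \sqcup U$ with a $0$-framed unlink in $\#^{n+k}(S^1 \X S^2)$.

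The principal obstacle lies in the reverse direction of~(2). Showing that an arbitrary type-$3$ stabilization of $\Tt(L,\Sigma)$ can always be realized by adjoining a $0$-framed unknot to $L$ (possibly after replacing $\Sigma$ by another admissible surface) requires careful tracking of cut systems for the three handlebodies, so that the framed link data survives each move coherently. In particular, one must argue that the dual curve introduced by a type-$3$ stabilization can be chosen to be a meridian of some new unknotted component of $L$, rather than interacting nontrivially with the existing components. Once this geometric bookkeeping is in place, both statements reduce to an orderly enumeration of how the standard trisection of $S^4$ pulls back, via the dictionary, to a $0$-framed unlink.
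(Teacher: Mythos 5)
Your three-entry dictionary is exactly the scaffolding the paper builds: entry (a) is the content of Lemma~\ref{lem:slide} (handleslide-equivalent links are $2$--equivalent), entry (b) is Corollary~\ref{cor:stab}(3), and entry (c) is Lemma~\ref{lem:unlink}. For part~(1) and the forward implication of part~(2) your outline matches the paper's argument. One small imprecision in entry (a): Lemma~\ref{lem:slide} does not say that a handleslide of $L$ \emph{is} a type-$2$ stabilization of $\Tt(L,\Sigma)$; it says the two trisections admit a \emph{common} $2$--stabilization. That distinction is what makes $\{2\}$--standardness propagate correctly, and it deserves to be stated precisely.

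Where your plan diverges is the reverse direction of~(2), which you flag as the principal obstacle. The worry about ``recognizing'' an arbitrary type-$3$ stabilization as adjunction of a $0$-framed unknot is a non-issue: $i$--stabilization is, by definition, the uniquely determined connected sum $\Tt\#\Ss_i$, and Corollary~\ref{cor:stab} already exhibits $\Tt(L,\Sigma)\#\Ss_3$ as $\Tt\bigl((L,\Sigma)\ast(U,\Sigma_U)\bigr)$; there is no freedom to ``interact nontrivially with the existing components.'' The paper therefore argues forward rather than backward: given integers $s,t$ witnessing $\{2,3\}$--standardness, it constructs the explicit admissible pair $(L_*,\Sigma_*)=(L,\Sigma)\ast(\emp,\Sigma_\emp)^{\ast s}\ast(U,\Sigma_U)^{\ast t}$, observes $\Tt(L_*,\Sigma_*)$ is the postulated standard trisection, and invokes a single key auxiliary result you have not isolated --- Lemma~\ref{lem:stdslide}, which reads off Stable Property~R for any link whose induced trisection is standard, by comparing the two trisection diagrams $(\alpha,\beta,\gamma)$ and $(\alpha',\beta',\gamma')$ and noting that $\gamma$ and $\gamma'$ are related by slides in $\Sigma$. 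Since $L_*=L\sqcup U_t$, Stable Property~R for $L$ follows. Your sketch would eventually reprove something like Lemma~\ref{lem:stdslide} inside the argument; factoring it out as a standalone lemma, and running the construction forward via $\ast$, is what makes the reverse implication short rather than a delicate decoding exercise.
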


In Section~\ref{sec:GST}, we analyze examples of Gompf-Schlarlemann-Thompson, the most prominent possible counterexamples to the GPRC.  The first step in a program to disprove the GPRC via Theorem~\ref{thm:equiv} is to find low-genus admissible surfaces for these links, along with diagrams for their induced trisections.  We outline this process; extensions of Section~\ref{sec:GST} will appear in forthcoming work~\cite{Meier-Zupan_Fibered_}.

In Section~\ref{sec:rect}, we introduce an analog of the Casson-Gordon Rectangle Condition~\cite{Casson-Gordon_Reducing_1987} for trisection diagrams, giving a sufficient condition for a trisection diagram to correspond to an irreducible trisection.

\subsection*{Acknowledgements}\ 
The first author is supported by NSF grants DMS-1400543 and DMS-1664540, and the second author is supported by NSF grant DMS-1664578 and NSF-EPSCoR grant OIA-1557417.

%%%%%%%%%%%%%%%%%%%%%%%%%%%%%%%%%%%%%%%%%%%%%%%%%%%%%%%%
%%%%%%%%%%%%%%%%%%%%%%%%%%%%%%%%%%%%%%%%%%%%%%%%%%%%%%%%
\section{Trisections and admissible links}\label{sec:trisections}
%%%%%%%%%%%%%%%%%%%%%%%%%%%%%%%%%%%%%%%%%%%%%%%%%%%%%%%%
%%%%%%%%%%%%%%%%%%%%%%%%%%%%%%%%%%%%%%%%%%%%%%%%%%%%%%%%

All manifolds are connected and orientable, unless otherwise stated.  We will let $\nu( \cdot )$ refer to an open regular neighborhood in an ambient manifold that should be clear from context.  The \emph{tunnel number} of a link $L \subset Y$ is the cardinality of the smallest collection of arcs $a$ with the property that $Y \setminus \nu(L \cup a)$ is a handlebody.  In this case, $\pd \nu(L \cup a)$ is a \emph{Heegaard surface} cutting $Y \setminus \nu(L)$ into a handlebody and a compression body.  A \emph{framed} link refers to a link with an integer framing on each component.

Let $L$ be a framed link in a 3-manifold $Y$, and let $a$ be a framed arc connecting two distinct components of $L$, call them $L_1$ and $L_2$.  The framings of $L_1$, $L_2$ and $a$ induce an embedded surface $S \subset Y$, homeomorphic to a pair of pants, such that $L_1 \cup L_2 \cup a$ is a core of $S$.  Note that $S$ has three boundary components, two of which are isotopic to $L_1$ and $L_2$.  Let $L_3$ denote the third boundary component, with framing induced by $S$.  If $L'$ is the framed link $(L \setminus L_1) \cup L_3$, we say that $L'$ is obtained from $L$ by a \emph{handleslide} of $L_1$ over $L_2$ along $a$.

If two links are related by a finite sequence of handleslides, we say they are \emph{handleslide equivalent}.  It is well-known that Dehn surgeries on handleslide equivalent framed links yield homeomorphic 3-manifolds~\cite{Kirby_A-calculus_1978}.  Recall that an R-link is an $n$-component link in $S^3$ with a Dehn surgery to the manifold $\#^n(S^1 \X S^2)$, which we henceforth denote by $Y_n$.  Let $U_n$ denote the $n$-component zero-framed unlink in $S^3$.  If an R-link $L$ is handleslide equivalent to $U_n$, we say that $L$ has \emph{Property R}.  If the split union $L \sqcup U_r$ is handleslide equivalent to $U_m$ for some integers $r$ and $m$, we say that $L$ has \emph{Stable Property R}.  The following conjectures are well-known; the first is Kirby Problem 1.82~\cite{Kirby_Problems_1978}.

\begin{GPRC}
Every R-link has Property R.
\end{GPRC}

\begin{SGPRC}
Every R-link has Stable Property R.
\end{SGPRC}

In this section, we explore the relationship between R-links (and a more general family we call admissible links) and trisections of the smooth 4-manifolds that can be constructed from these links.

Let $X$ be a smooth, orientable, closed 4-manifold.  A \emph{$(g;k_1,k_2,k_3)$--trisection} $\Tt$ of $X$ is a decomposition $X = X_1\cup X_2\cup X_3$ such that
	\begin{enumerate}
		\item Each $X_i$ is a four-dimensional 1--handlebody, $\natural^{k_i}(S^1\times B^3)$;
		\item If $i\not=j$, then $H_{ij} = X_i\cap X_j$ is a three-dimensional handlebody, $\natural^g(S^1\times D^2)$; and
		\item The common intersection $\Sigma = X_1\cap X_2\cap X_3$ is a closed genus $g$ surface.
	\end{enumerate}
	The surface $\Sigma$ is called the \emph{trisection surface}, and the parameter $g$ is called the \emph{genus} of the trisection.    The trisection $\Tt$ is called \emph{balanced} if $k_1=k_2=k_3=k$, in which case it is called a \emph{$(g;k)$--trisection}; otherwise, it is called \emph{unbalanced}.  We call the union $H_{12}\cup H_{23}\cup H_{31}$ the \emph{spine} of the trisection.  In addition, we observe that $\partial X_i = Y_{k_i} = H_{ij} \cup_\Sigma H_{li}$ is a genus $g$ Heegaard splitting.  Because there is a unique way to cap off $Y_{k_i}$ with $\natural^{k_i}(S^1\times B^3)$~\cite{Laudenbach-Poenaru_A-note_1972,Montesinos_Heegaard_1979},  every trisection is uniquely determined by its spine.

Like Heegaard splittings, trisections can be encoded with diagrams.  A \emph{cut system} for a genus $g$ surface $\Sigma$ is a collection of $g$ pairwise disjoint simple closed curves that cut $\Sigma$ into a $2g$-punctured sphere.  A cut system $\delta$ is said to \emph{define a handlebody} $H_{\delta}$ if each curve in $\delta$ bounds a disk in $H_{\delta}$.  A triple $(\alpha,\beta,\gamma)$ of cut systems is called a \emph{$(g;k_1,k_2,k_3)$--trisection diagram} for $\Tt$ if $\alpha$, $\beta$, and $\gamma$ define the components $H_{\alpha}, H_{\beta}$, and $H_{\gamma}$ of the spine of $\Tt$.  We set the convention that $H_{\alpha} = X_3\cap X_1$, $H_{\beta} = X_1\cap X_2$, and $H_{\gamma} = X_2\cap X_3$.  The careful reader will note that this convention differs slightly from~\cite{Meier-Schirmer-Zupan_Classification_2016}.  With these conventions, $(\alpha,\beta)$, $(\beta,\gamma)$, and $(\gamma,\alpha)$ are Heegaard diagrams for $Y_{k_1}$, $Y_{k_2}$, and $Y_{k_3}$, respectively.  In~\cite{Gay-Kirby_Trisecting_2016}, Gay and Kirby prove that every smooth 4-manifold admits a trisection, and trisection diagrams, modulo handle slides within the three collections of curves, are in one-to-one correspondence with trisections.

\begin{examples}\label{exs:trisections}
	Trisections with genus at most two are well-understood. See Figure~\ref{fig:Diags}.
	\begin{enumerate}
		\item There is a unique genus zero trisection; the $(0,0)$--trisection describing $S^4$.
		\item There are exactly six genus one trisections.  Both $\CP^2$ and $\overline{\CP}^2$ admit $(1;0)$--trisections; $S^1\times S^3$ admits a $(1;1)$--trisection; and $S^4$ admits three unbalanced genus one trisections.
		\item There is a unique irreducible (defined below) genus two trisection~\cite{Meier-Schirmer-Zupan_Classification_2016,Meier-Zupan_Genus-two_2017}, which describes $S^2\times S^2$.
	\end{enumerate}
\end{examples}

Given trisections $\Tt$ and $\Tt'$ for 4-manifolds $X$ and $X'$, we can obtain a trisection for $X\#X'$ by removing a neighborhood of a point in each trisection surface and gluing pairs of components of $\Tt$ and $\Tt'$ along the boundary of this neighborhood.  The resulting trisection is uniquely determined in this manner; we denote it by $\Tt\#\Tt'$.  A trisection $\Tt$ is called \emph{reducible} if $\Tt = \Tt'\#\Tt''$, where neither $\Tt'$ nor $\Tt''$ is the genus zero trisection; otherwise, it is called \emph{irreducible}.  Equivalently, $\Tt$ is reducible precisely when there exists a curve $\delta$ in $\Sigma$ that bounds compressing disks in $H_{\A}$, $H_{\n}$, and $H_{\g}$.  Such a curve $\delta$ represents the intersection of a decomposing 3-sphere with the trisection surface.

\begin{figure}[h!]
	\centering
	\includegraphics[width=.8\textwidth]{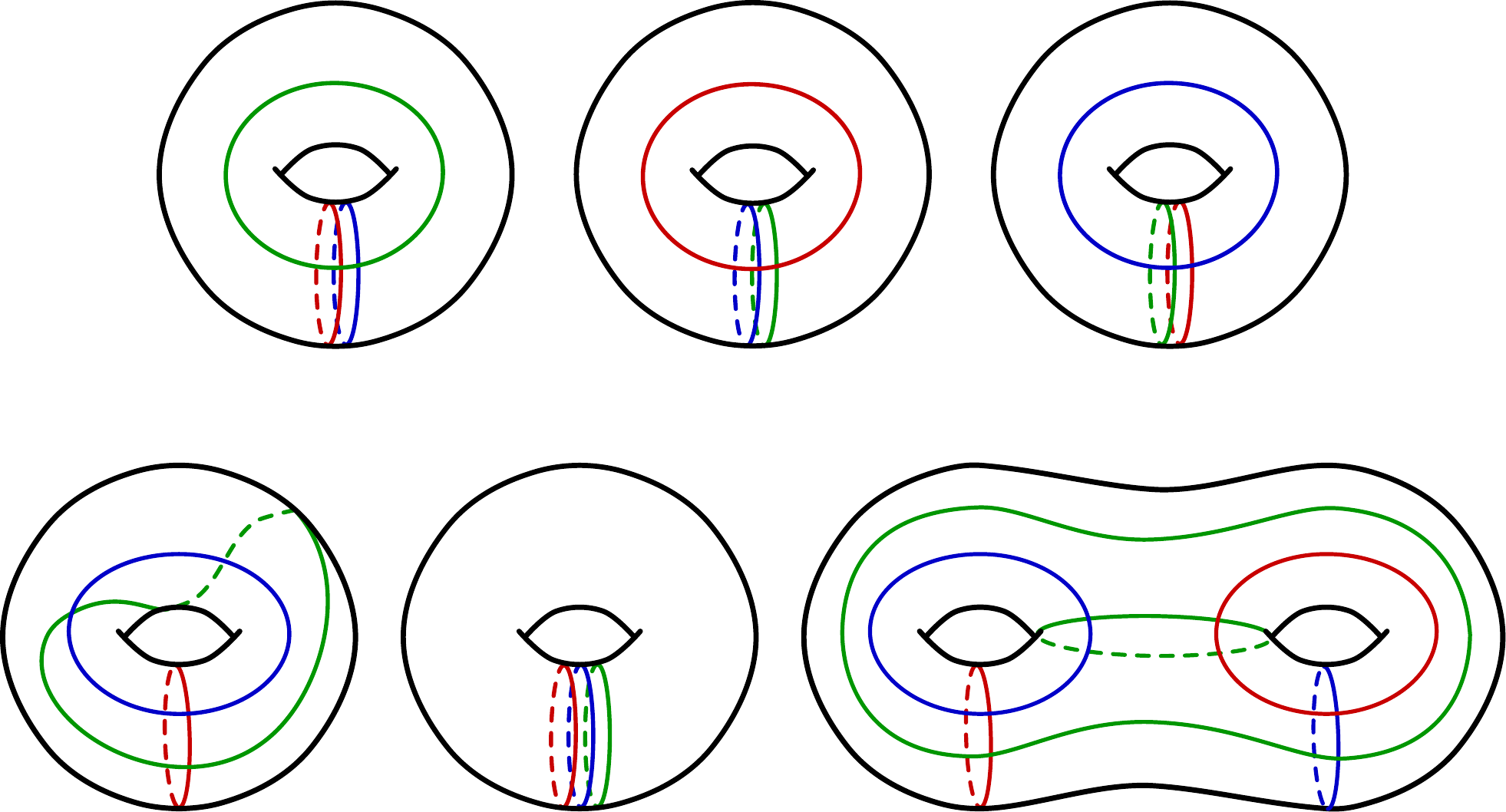}
	\caption{Low-genus trisection diagrams. Top, from left to right: the genus one diagrams $\Ss_1$, $\Ss_2$, and $\Ss_3$ for $S^4$.  Bottom, from left to right: the genus one diagrams for $\CP^2$ and $S^1\times S^3$, and the genus two diagram for $S^2\times S^2$.}
	\label{fig:Diags}
\end{figure}

In dimension three, stabilization of a Heegaard surface may be viewed as taking the connected sum with the genus one splitting of $S^3$, and a similar structure exists for trisections.  Let $\Ss_i$ denote the unique genus one trisection of $S^4$ satisfying $k_i=1$.  Diagrams for these three trisections are shown in Figure~\ref{fig:Diags}.  A trisection $\Tt$ is called \emph{$i$--stabilized} if $\Tt = \Tt'\#\Ss_i$, and is simply called \emph{stabilized} if it is $i$--stabilized for some $i=1,2,3$.  Two trisections $\Tt'$ and $\Tt''$ are called \emph{stably equivalent} if there is a trisection $\Tt$ that is a stabilization of both $\Tt'$ and $\Tt''$.  Gay and Kirby proved that any two trisections of a fixed 4-manifold are stably equivalent~\cite{Gay-Kirby_Trisecting_2016}.

We say that a trisection $\Tt$ is \emph{standard} if $\Tt$ can be expressed as the connected sum of the trisections listed in Examples~\ref{exs:trisections}.  Theorems in~\cite{Meier-Schirmer-Zupan_Classification_2016,Meier-Zupan_Genus-two_2017} classify trisections of genus two.

\begin{theorem}\label{thm:g2standard}
	Every trisection $\Tt$ with genus $g =2$ is standard.
\end{theorem}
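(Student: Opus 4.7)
The plan is to split on whether $\Tt$ is reducible. I would treat the reducible case by using the genus-one classification, and the irreducible case by quoting the uniqueness statement in Examples~\ref{exs:trisections}(3).

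First, suppose $\Tt$ is reducible, so $\Tt = \Tt'\#\Tt''$ with neither summand equal to the genus zero trisection. Since genus is additive under connected sum of trisections and $g(\Tt) = 2$, both summands must have genus exactly one. I would then invoke Examples~\ref{exs:trisections}(2), which enumerates the six genus one trisections; each of them already appears in the list whose connected sums are declared standard, so $\Tt = \Tt'\#\Tt''$ is standard by definition.

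Next, suppose $\Tt$ is irreducible. Examples~\ref{exs:trisections}(3) asserts that, up to equivalence, there is a unique irreducible genus two trisection, namely the one describing $S^2 \times S^2$, and this trisection is itself on the list used to define standardness. Hence $\Tt$ is standard.

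Where the work really lives, and the main obstacle to carrying out this plan from scratch, is the uniqueness claim of Examples~\ref{exs:trisections}(3), established in \cite{Meier-Schirmer-Zupan_Classification_2016,Meier-Zupan_Genus-two_2017}. Proving it requires a careful diagrammatic analysis: one must enumerate, up to handleslides within each of $\alpha$, $\beta$, $\gamma$ and up to diffeomorphism of the genus two surface, the triples $(\alpha,\beta,\gamma)$ of cut systems compatible with the trisection axioms, and show that any such triple with no common compressing curve reduces to the standard $S^2 \times S^2$ diagram shown in Figure~\ref{fig:Diags}. By contrast, the reducible case is essentially formal: a curve $\delta \subset \Sigma$ bounding disks in all three of $H_\alpha$, $H_\beta$, $H_\gamma$ assembles into a decomposing $3$-sphere realizing the connected-sum decomposition at the level of the trisection, after which the genus one classification finishes the argument.
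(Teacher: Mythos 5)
Your derivation is correct as far as it goes, but note that the paper itself gives no proof of Theorem~\ref{thm:g2standard}: the sentence immediately preceding it simply cites~\cite{Meier-Schirmer-Zupan_Classification_2016,Meier-Zupan_Genus-two_2017} for the genus-two classification. Your case split (reducible: genus additivity forces two genus-one summands, each on the list of Examples~\ref{exs:trisections}(2); irreducible: the uniqueness statement of Examples~\ref{exs:trisections}(3)) is the natural way to extract the theorem from the facts recorded in Examples~\ref{exs:trisections}, and you are right that the entire mathematical content lives in the cited diagrammatic classification. Be aware, though, that Examples~\ref{exs:trisections}(3) is itself imported from the same references, so what you have written is a correct repackaging of the cited result rather than an independent argument -- which you candidly acknowledge in your final paragraph.
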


%In the special case $X = S^4$, one might wonder if a stronger statement could be true.  A trisection is called \emph{standard} if it is a stabilization of the genus zero trisection of $S^4$, i.e., if it is a connected sum of copies of the $\Ss_i$.  In dimension three, it is a well-known theorem of Waldhausen that every Heegaard splitting of $S^3$ is a stabilization of the genus zero splitting~\cite{Waldhausen_Heegaard-Zerlegungen_1968}, inspiring the following conjecture.

%\begin{conjecture}\cite{Meier-Schirmer-Zupan_Classification_2015}\label{conj:Stand}
%	Every trisection of $S^4$ is standard.
%\end{conjecture}

%As an intermediary between Theorem~\ref{thm:GKstab} and Conjecture~\ref{conj:Stand}, we consider partial stabilizations, in which we take connected sums with some (but not all) of the standard genus one splittings of $S^4$.  Specifically, for any subset $I\subset\{1,2,3\}$, a trisection $\Tt$ of $S^4$ is called \emph{$I$--standard} if $\Tt$ becomes standard after some number of stabilizations using $\Ss_j$ with $j\in I$.  For example, $\Tt$ is 1--standard if only 1--stabilizations are required to make $\Tt$ standard and $\{2,3\}$--standard if 1--stabilizations are \emph{not} required.  By Theorem~\ref{thm:GKstab}, every trisection of $S^4$ is $\{1,2,3\}$--standard.

Below, we see how this theorem implies Theorem~\ref{main1}, and for this purpose, we turn our attention to surgery on links.  

%%%%%%%%%%%%%%%%%%%%%%%%%%%%%%%%%%%%%%%%%%%%%%%%%%%%%%%%%%%%%%%%%%%%%%%%%%%%%
\subsection{Admissible links and surfaces}\label{subsec:admissible}\ 
%%%%%%%%%%%%%%%%%%%%%%%%%%%%%%%%%%%%%%%%%%%%%%%%%%%%%%%%%%%%%%%%%%%%%%%%%%%%%

Recall that $Y_k$ denotes $\#^k(S^1 \times S^2)$, and let $L$ be a framed $n$--component link in $Y_k$ such that Dehn surgery on $L$ yields $Y_{k'}$.  We call such a link \emph{admissible}.  If $L$ is an admissible link, $L$ describes a closed 4-manifold $X_L$ with a handle decomposition with $k$ 1--handles, $n$ 2--handles, and $k'$ 3--handles.  An \emph{admissible} Heegaard surface $\Sigma$ for $L$ is a Heegaard surface cutting $Y_k$ into two handlebodies $H$ and $H'$, such that a core of $H$ contains $L$.  As such, $C = H \setminus \nu(L)$ is a compression body and $\Sigma$ may be viewed as a Heegaard surface for the link exterior $E(L) = Y_k \setminus \nu(L)$.  Let $H_L$ be the handlebody that results from Dehn filling $C$ (or performing Dehn surgery on $L$ in $H$) along the framing of the link $L$.  An \emph{admissible pair} consists of an admissible link together with an admissible Heegaard surface.

For completeness, we will also allow the empty link, $L = \emp$.  An admissible surface $\Sigma$ for the empty link is a (standard) genus $g$ Heegaard surface for $Y_k$.  A genus $g$ Heegaard diagram $(\A,\n)$ for $Y_k$ is called \emph{standard} if $\A \cap \n$ contains $k$ curves, and the remaining $g-k$ curves occur in pairs that intersect once and are disjoint from other pairs.  A trisection diagram is called \emph{standard} if each pair is a standard Heegaard diagram.

\begin{lemma}\label{lem:constr}
	Let $L$ be an admissible $n$-component link in $Y_k$.  Every admissible pair $(L,\Sigma)$ gives rise to a trisection $\Tt(L,\Sigma)$ with spine $H' \cup H \cup H_L$.  If $g(\Sigma) = g$, then $\Tt(L,\Sigma)$ is a $(g;k,g-n,k')$-trisection.  Moreover, there is a trisection diagram $(\A,\n,\g)$ for $\Tt(L,\Sigma)$ such that
\begin{enumerate}
\item $H_{\A} = H'$, $H_{\n} = H$, and $H_{\g} = H_L$;
\item $L$ is a sublink of $\g$, where $\g$ is viewed as a link framed by $\Sigma$ in $Y_k = H_{\A} \cup H_{\n}$; and
\item $(\n,\g)$ is a standard diagram for $Y_{g-n}$, where $\n \cap \g = \g \setminus L$.
\end{enumerate}
\end{lemma}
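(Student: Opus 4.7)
The plan is to build the spine $H'\cup H\cup H_L$ together with an adapted cut system, verify that each pairwise union along $\Sigma$ is a genus $g$ Heegaard splitting of the claimed $Y_{k_i}$, and invoke Laudenbach--Poenaru to cap off uniquely with $\natural^{k_i}(S^1\times B^3)$, producing the $(g;k,g-n,k')$-trisection.

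To set up the cut system, I would extend $L$ to a core system $L\cup L_0$ of $H$, where $L_0$ has $g-n$ components; this determines a cut system $\n=\{\mu_1,\ldots,\mu_n,\mu'_1,\ldots,\mu'_{g-n}\}$ for $H$, where $\mu_i\subset\Sigma$ is the meridian of $L_i$ and $\mu'_j$ is the meridian of the $j$-th component of $L_0$. Under the natural identification of $\partial\nu(L_i)$ with the handle of $\Sigma$ around $\mu_i$, the integer framing curve of $L_i$ corresponds to a curve $\g_i\subset\Sigma$ meeting $\mu_i$ transversely once. The key point, and the main technical ingredient, is that $H_L$ is a genus $g$ handlebody with cut system $\g=\{\g_1,\ldots,\g_n,\mu'_1,\ldots,\mu'_{g-n}\}$: integer Dehn surgery on the core of a solid torus produces another solid torus whose new meridian is the surgery slope and whose new core is the old meridian, so performing this simultaneously on each component of $L$ preserves the handlebody structure of $H$ while replacing the cut system as described.

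Given these ingredients, the three Heegaard splittings along $\Sigma$ follow quickly: $H'\cup H=Y_k$ by admissibility; $H'\cup H_L=Y_{k'}$ because substituting $H_L$ for $H$ in the splitting of $Y_k$ realizes exactly the Dehn surgery on $L$ that produces $Y_{k'}$; and $H\cup H_L=Y_{g-n}$ by reading off the diagram $(\n,\g)$, which has $g-n$ parallel pairs (each $\mu'_j$ appearing on both sides) and $n$ pairs ($\mu_i$ and $\g_i$) meeting transversely once, i.e.\ the standard genus $g$ diagram for $\#^{g-n}(S^1\times S^2)=Y_{g-n}$. Capping each splitting via Laudenbach--Poenaru yields the trisection $\Tt(L,\Sigma)$ with the claimed parameters. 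Taking $\A$ to be any cut system for $H'$, properties (1) and (3) hold by construction; for (2), the curve $\g_i$ pushed off $\Sigma$ into $H$ is a parallel copy of $L_i$ with framing $n_i$ by the defining identification of $\g_i$, so $L$ is a sublink of $\g$ once $\g$ is framed by $\Sigma$.
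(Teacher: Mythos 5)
Your argument is correct and, in essence, reproduces the construction that the paper simply \emph{cites}: the paper's own ``proof'' of Lemma~\ref{lem:constr} for $L\neq\emptyset$ is a pointer to Gay--Kirby and to Meier--Schirmer--Zupan, plus a short remark on the empty-link case. So you are not diverging from the paper's approach; you are supplying the details the paper delegates to references. Your cut system for $H$ (meridians of a core system extending $L$), the identification of the new cut system for $H_L$ with the framing curves $\gamma_i$ together with the untouched $\mu'_j$, the three pairwise Heegaard splittings, and the appeal to Laudenbach--Po\'enaru all match the standard construction and establish (1)--(3) as stated.

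One step that is true but a bit terse as written is the assertion that performing the integer surgeries ``simultaneously on each component of $L$ preserves the handlebody structure of $H$.'' Replacing solid tori by solid tori inside a $3$--manifold does not preserve the homeomorphism type in general, so a word is needed. The cleanest justification in your framework: choose $n$ disjoint, properly embedded separating disks in $H$, disjoint from the core system, realizing a boundary-connected-sum decomposition $H = H_{0} \natural W_{1} \natural \cdots \natural W_{n}$ in which $W_i$ is a solid torus with core $L_i$ and $H_0$ is a genus $g-n$ handlebody containing $L_0$. Each surgery is then confined to its own summand $W_i$ and, by your observation, turns $W_i$ into a new solid torus $W_i'$ with meridian $\gamma_i$; hence $H_L = H_0 \natural W_1' \natural \cdots \natural W_n'$ is again a genus $g$ handlebody, with the cut system you describe. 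Finally, although you do not single out $L=\emptyset$ (which the paper handles separately), your construction covers it vacuously with $n=0$, giving $H_L = H$, $\gamma=\beta$, and the $(g;k,g,k)$-trisection.
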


\begin{proof}
This is proved (in slightly different formats) for $L \neq \emp$ in both~\cite{Gay-Kirby_Trisecting_2016} and~\cite{Meier-Schirmer-Zupan_Classification_2016}.  If $L = \emp$, then it follows easily that $X_L$ has a handle decomposition without 2-handles, $H = H_L$, and $H' \cup H \cup H_L$ is the spine for the $(g;k,g,k)$-trisection $\Tt(L,\Sigma)$ of $X_L$.  In this case, there is a diagram such that $\n = \g$, the standard genus $g$ diagram of $Y_g$.
\end{proof}

This machinery is enough to prove Theorem~\ref{main1}, classifying cosmetic surgeries on tunnel number one links in $S^3$.  Note that the conventions $H_{\A} = H'$, $H_{\n} = H$, and $H_{\g} = H_L$ agree with our earlier conventions identifying the union of the 0--handle and the 1--handles with $X_1$, the trace of the Dehn surgery on $H_{\n}$ along $L$ with $X_2$, and the union of the 3--handles and the 4--handle with $X_3$.

\begin{proof}[Proof of Theorem~\ref{main1}]
Suppose $L \subset S^3$ is a tunnel number one link with an integral Dehn surgery to $S^3$.  Then there exists an admissible surface $\Sigma \subset S^3$ and a genus two trisection $\Tt(L,\Sigma)$ with a diagram $(\A,\n,\g)$, where $H_L = H_{\g}$.  By Lemma~\ref{lem:constr}, $\Tt(L,\Sigma)$ is a $(2,0)$-trisection, the two curves in $\g$ are isotopic to the link $L$ in $S^3 = H_{\A} \cup H_{\n}$, and the surface framing of $\g$ in $\Sigma$ is the framing of $L$.  By Theorem~\ref{thm:g2standard}, the trisection $\Tt(L,\Sigma)$ of $X_L$ is standard, and $(\A,\n,\g)$ is handleslide equivalent to a standard diagram $(\A',\n',\g')$.  Since $\Tt(L,\Sigma)$ is $(2,0)$-trisection, $X_L$ is diffeomorphic to either $S^2 \X S^2$ or $\pm \CP \# \pm \CP$.  In the first case, $\g$ is handleslide equivalent to $\g'$, which is a zero-framed Hopf link in $S^3 = H_{\A} \cup H_{\n}$.  In the second case, $\g$ is handleslide equivalent to $\g'$, a 2-component unlink with framings $\pm 1$ and $\pm 1$, completing the proof.
\end{proof}

We now turn our attention to R-links.  Note that if $L$ is an R-link, then the smooth 4-manifold $X_L$ has a handle decomposition with no 1-handles, $n$ 2-handles, and $n$ 3-handles; thus $X_L$ is a simply connected 4-manifold with $\chi(X_L) = 2$, so that $X_L$ is a homotopy $S^4$.  We describe an immediate connection between Stable Property R and trisections in the next lemma.

\begin{lemma}\label{lem:stdslide}
Suppose $L$ is an R-link with admissible surface $\Sigma$ and $\Tt(L,\Sigma)$ is a standard trisection of $S^4$.  Then $L$ has Stable Property R.
\end{lemma}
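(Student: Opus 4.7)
The plan is to show that the $\g$-cut system, viewed as a framed link in $S^3 = H_\A \cup H_\n$, equals $L \sqcup U_{g-n}$, and that in any standard diagram for $\Tt(L,\Sigma)$ the corresponding $\g'$-link equals $U_g$.  Since two diagrams for the same trisection are related by a sequence of handleslides within each cut system, and since slides of $\g$-curves along arcs on $\Sigma$ realize Kirby handleslides of the associated framed link in $S^3$, this gives that $L \sqcup U_{g-n}$ is handleslide equivalent to $U_g$ in $S^3$, which is precisely Stable Property R for $L$ (with $r = g-n$ and $m = g$).

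First I would verify that $\g = L \sqcup U_{g-n}$ in $S^3$.  By Lemma~\ref{lem:constr}, $L$ is a sublink of $\g$ and the remaining $g-n$ curves $\g \setminus L$ coincide with $\n \cap \g$; in particular they bound pairwise disjoint compressing disks in the compression body $C = H_\n \setminus \nu(L)$.  Because $C$ lies in $S^3 \setminus \nu(L)$, these disks are disjoint from $L$ and from each other, so $\g \setminus L$ is a split unlink from $L$ in $S^3$.  A parallel pushoff of any such curve along $\Sigma$ bounds a parallel, disjoint copy of its disk in $C$, so its $\Sigma$-framing agrees with the $0$-framing in $S^3$.

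Next I would identify $\g'$ in the standard diagram.  The trisection parameters $(g;0,g-n,n)$ together with the standardness hypothesis force $\Tt(L,\Sigma) = \Ss_2^{\#(g-n)} \# \Ss_3^{\#n}$.  Inspection of the diagrams in Figure~\ref{fig:Diags} shows that in any standard diagram each $\g'$-curve lies entirely in one genus-one summand, coinciding with the $\n'$-curve in an $\Ss_2$ summand or with the $\A'$-curve in an $\Ss_3$ summand.  Each such curve therefore bounds a disk in $H_\n$ or $H_\A$, is separated from the others across the connect-sum, and has $\Sigma$-framing $0$ by the same pushoff argument; thus $\g' = U_g$ in $S^3$.

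The main step requiring care is the standard dictionary between handleslides in a trisection diagram and Kirby handleslides of the associated framed link in $S^3$: since each slide arc lies on $\Sigma \subset S^3$ and the slide band is an embedded rectangle on $\Sigma$, the operation coincides with a Kirby handleslide that preserves $\Sigma$-framings.  With this in place, the handleslide equivalence of the diagrams $(\A,\n,\g)$ and $(\A',\n',\g')$ within the $\g$-cut system realizes $L \sqcup U_{g-n}$ as handleslide equivalent to $U_g$ as framed links in $S^3$, which is Stable Property R for $L$.
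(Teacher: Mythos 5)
Your proof is correct and takes essentially the same approach as the paper: identify $\g = L \sqcup U_{g-n}$ in $S^3 = H_\A \cup H_\n$ via Lemma~\ref{lem:constr}, identify $\g' = U_g$ in the standard diagram forced by the trisection parameters $(g;0,g-n,n)$, and use the fact that the $\g$- and $\g'$-cut systems for the same trisection are handleslide equivalent within $\Sigma$ to conclude Stable Property R. Your additional verifications that the $\Sigma$-framings of the unlink components are $0$ make explicit a point the paper leaves implicit, but the argument structure is the same.
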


\begin{proof}
By Lemma~\ref{lem:constr}, the trisection $\Tt(L,\Sigma)$ has a diagram $(\A,\n,\g)$ such that $(\n,\g)$ is the standard Heegaard diagram for $Y_{g-n}$.  Viewing $\g$ as a $g$-component link in $S^3 = H_{\A} \cup H_{\n}$, we have that $(g-n)$ curves in $\g$ bounds disks in $H_{\n}$, while the remaining $n$ curves are isotopic to $L$ (and are disjoint from the $(g-n)$ disks).  Thus, as a link in $S^3$, we have $\g = L \sqcup U_{g-n}$.

In addition, the trisection $\Tt(L,\Sigma)$ is a standard $(g;0,g-n,n)$-trisection of $S^4$ by hypothesis.  As such, it must be a connected sum of $g-n$ copies of $\Ss_2$ and $n$ copies of $\Ss_3$, and it has a standard diagram, $(\A',\n',\g')$, where $g-n$ curves in $\g'$ are also curves in $\n'$, and the remaining $n$ curves are also curves in $\A'$.  Thus, in $S^3 = H_{\A'} \cup H_{\n'}$, the curves $\g'$ comprise a $g$-component unlink, with surface framing equal to the zero framing on each component.  Since $(\A,\n,\g)$ and $(\A',\n',\g')$ are trisection diagrams for the same trisection, we have that $\g$ is handleslide equivalent to $\g'$ via slides contained in $\Sigma$.  Thus, $\g$ and $\g'$ are handleslide equivalent links in $S^3$.  We conclude that $L$ has Stable Property R, as desired.
\end{proof}

Theorem~\ref{main2} can be quickly proved using this lemma and the following result from~\cite{Meier-Schirmer-Zupan_Classification_2016} as its main input.

\begin{theorem}\label{thm:msz}
Every $(g;0,1,g-1)$-trisection is a standard trisection of $S^4$.
\end{theorem}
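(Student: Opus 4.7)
The plan is to induct on the trisection genus $g$. For $g = 1$, the unique $(1;0,1,0)$-trisection is $\Ss_2$ and is standard by definition.

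For the inductive step with $g \geq 2$, let $(\A,\n,\g)$ be a diagram for $\Tt$. The approach is to bring all three pairwise Heegaard diagrams into standard form simultaneously, via a carefully coordinated sequence of handleslides; the resulting trisection diagram will then match that of $\Ss_2\#\Ss_3^{\#(g-1)}$, the unique standard $(g;0,1,g-1)$-trisection. First, $(\n,\g)$ is a genus-$g$ Heegaard diagram for $\partial X_2 = Y_1 = S^1 \X S^2$, and since $g \geq 2$ exceeds the minimal Heegaard genus, this splitting is stabilized. Waldhausen's theorem for Heegaard splittings of $\#^k(S^1 \X S^2)$ permits handleslides in $\n$ and $\g$ alone (fixing $\A$) to place $(\n,\g)$ in standard form: one common curve $c_0 = \n_0 = \g_0$ together with $g-1$ dual pairs $(\n_i,\g_i)$, $i=1,\ldots,g-1$. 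Similarly, $(\g,\A)$ is a Heegaard diagram for $\partial X_3 = Y_{g-1}$ whose genus $g$ is one above the minimum, hence stabilized; Waldhausen's theorem yields a standard form with $g-1$ common curves and one dual pair.

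The bulk of the work lies in simultaneously standardizing both $(\n,\g)$ and $(\g,\A)$. The plan is to execute the handleslides required to standardize $(\g,\A)$, and whenever a slide in $\g$ is performed, to pair it with a corresponding signed handleslide in $\n$ designed to preserve the standard form of $(\n,\g)$. Such coordinated pairing is possible because handleslides in the three cut systems are independent. Once both standard forms are achieved, the $\A$-curves split naturally into $g-1$ common curves $\A_i = \g_i$ (for $i \geq 1$) together with one curve $\A_0$ dual to $c_0 = \g_0$. At this stage the third pair $(\A,\n)$ is automatically in standard form as a Heegaard diagram for $S^3$: $\A_0$ is dual to $\n_0 = c_0$ and, for each $i\geq 1$, $\A_i = \g_i$ is dual to $\n_i$, giving $g$ dual pairs. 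The full trisection diagram then agrees with that of $\Ss_2\#\Ss_3^{\#(g-1)}$, showing $\Tt$ is standard.

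The main obstacle is the coordination step. Slides involving only the ``secondary'' curves $\g_1,\ldots,\g_{g-1}$ can be balanced by analogous signed slides in the corresponding $\n_i$; however, slides that would move the distinguished curve $c_0 = \g_0$ are delicate, since $c_0$ is identified with $\n_0$, and a slide of $\g_0$ would force a corresponding slide of $\n_0$ that potentially disrupts the dual-pair structure of $(\n,\g)$. The technical heart of the argument is verifying that the destabilization required for $(\g,\A)$ can be arranged so that the resulting slide sequence in $\g$ never moves $c_0$---relying on the fact that a stabilized Heegaard splitting admits a compressing disk on each side disjoint from any preselected common curve. Producing and leveraging such a disk in $H_\g$ disjoint from $c_0$ is where I expect the proof to require the most careful geometric bookkeeping.
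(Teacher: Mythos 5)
The paper does not prove this theorem; it is quoted as an input from the Meier--Schirmer--Zupan classification paper \cite{Meier-Schirmer-Zupan_Classification_2016}, so there is no internal proof to compare against. That said, your proposal has a genuine gap at the step you declare ``automatic.'' Even granting the coordination step---which is itself only asserted, and which needs a real argument, since a $\gamma$-slide chosen to simplify $(\gamma,\alpha)$ has no a priori relationship to $\beta$ and there is no reason a compensating $\beta$-slide restoring the standard form of $(\beta,\gamma)$ must exist---the conclusion that $(\alpha,\beta)$ is then ``automatically in standard form'' does not follow. Suppose you have arranged $(\beta,\gamma)$ standard, so $\beta_0=\gamma_0=c_0$ and $(\beta_i,\gamma_i)$ are disjoint dual pairs for $i\geq 1$, and also $(\gamma,\alpha)$ standard, so $\gamma_i=\alpha_i$ for $i\geq 1$ and $(\gamma_0,\alpha_0)$ is a dual pair. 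These two standard forms control $\alpha_0$ only relative to the $\gamma$-curves and the other $\alpha$-curves, and control $\beta_i$ ($i\geq 1$) only relative to the $\gamma$-curves and the other $\beta$-curves; they impose no constraint whatsoever on $\alpha_0\cap\beta_i$ for $i\geq 1$. These curves can, and generically will, intersect, in which case $(\alpha,\beta)$ is not in standard form as drawn. Your phrase ``giving $g$ dual pairs'' conflates algebraic intersection data with the geometric configuration that a standard trisection diagram requires, namely that curves belonging to distinct genus-one summands be genuinely disjoint. This is not a cosmetic omission: arranging the third pair simultaneously with the first two is precisely the substantive content of the theorem, and the ``bulk of the work'' you locate in the coordination step is in fact even heavier at the step you wave through. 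The cited proof does not proceed by this coordinated-slide scheme; recovering it would require, at minimum, an additional argument producing slides that kill the residual $\alpha_0\cap\beta_i$ intersections without destroying the two standard forms already achieved.
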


%Note that if $X$ has a $(g;0,1,g-1)$-trisection $\Tt$, then $X$ has a handle decomposition with no 1--handles, $(g-1)$ 2--handles, and $(g-1)$ 3--handles.  As such, $X$ is a homotopy 4-sphere and the standard trisection $\Tt$ is a stabilization of the genus zero trisection of $S^4$.

\begin{proof}[Proof of Theorem~\ref{main2}]
Suppose $L \subset S^3$ is an $n$-component link with tunnel $n$ with a Dehn surgery to $\#^n(S^1 \X S^2)$.  Then by Lemma~\ref{lem:constr} there exists an admissible surface $\Sigma \subset S^3$ and an $(n+1;0,1,n)$-trisection $\Tt(L,\Sigma)$.  By Theorem~\ref{thm:msz}, the trisection $\Tt(L,\Sigma)$ is standard, and by Lemma~\ref{lem:stdslide}, $L$ has Stable Property R.  In fact, the proof of Lemma~\ref{lem:stdslide} reveals that $L \sqcup U_1$ has Property R, as desired.
\end{proof}

%%%%%%%%%%%%%%%%%%%%%%%%%%%%%%%%%%%%%%%%%%%%%%%%%%%%%
%%%%%%%%%%%%%%%%%%%%%%%%%%%%%%%%%%%%%%%%%%%%%%%%%%%%%
\section{R-links and stabilizations}\label{sec:R-stab}
%%%%%%%%%%%%%%%%%%%%%%%%%%%%%%%%%%%%%%%%%%%%%%%%%%%%%
%%%%%%%%%%%%%%%%%%%%%%%%%%%%%%%%%%%%%%%%%%%%%%%%%%%%%

In order to prove the third main theorem, we will further develop the connection between R-links, their induced trisections, and the various stabilization operations.

\begin{lemma}\label{lem:emp}
If $L = \emp$ in $Y_k$ and $g(\Sigma) = g$, then $X_L = \#^k(S^1 \times S^3)$, and $\Tt(\emp,\Sigma)$ is the connected sum of $k$ copies of the standard $(1;1)$-trisection of $S^1 \times S^3$ and $g-k$ copies of $\Ss_2$.
\end{lemma}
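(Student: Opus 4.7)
The plan is to recognize $\Tt(\emp,\Sigma)$ directly from a standard trisection diagram. First, because $L=\emp$, the handle decomposition of $X_L$ read off from $(\emp,\Sigma)$ has $k$ $1$--handles, no $2$--handles, and $k$ $3$--handles; since the $3$--handles attach uniquely by Laudenbach-Poenaru, we get $X_L \cong \#^k(S^1\X S^3)$. Then Lemma~\ref{lem:constr} furnishes a $(g;k,g,k)$--trisection $\Tt(\emp,\Sigma)$ with a diagram $(\A,\n,\g)$ satisfying $\n = \g$.

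Next, I would put the Heegaard diagram $(\A,\n)$ into standard form. Since every Heegaard splitting of $Y_k$ is standard (Waldhausen), we may isotope $(\A,\n)$ on $\Sigma$ so that there is a collection of $g-1$ pairwise disjoint separating curves, all disjoint from $\A\cup\n$, decomposing $\Sigma$ as a connected sum of $g$ tori $T_1,\dots,T_g$: on $T_1,\dots,T_k$ the curves $\alpha_i,\beta_i$ are parallel, and on $T_{k+1},\dots,T_g$ they intersect transversely in a single point. Crucially, because $\g = \n$, the same separating curves are automatically disjoint from $\g$, so $(\A,\n,\g)$ splits as a connected sum of $g$ genus-one trisection diagrams, one on each $T_i$.

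Finally, I would identify each summand. On $T_i$ with $i\le k$ we have $\alpha_i = \beta_i = \gamma_i$, so all three pairwise Heegaard diagrams present $Y_1 = S^1\X S^2$; the resulting balanced genus-one trisection is the standard $(1;1)$--trisection of $S^1\X S^3$. On $T_j$ with $j > k$ we have $|\alpha_j\cap\beta_j| = 1$ and $\gamma_j = \beta_j$, so $(\alpha_j,\beta_j)$ and $(\gamma_j,\alpha_j)$ are standard genus-one Heegaard diagrams of $S^3$ while $(\beta_j,\gamma_j)$ presents $Y_1$; comparing with the diagrams in Figure~\ref{fig:Diags}, this is precisely $\Ss_2$. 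Assembling the summands via the diagrammatic connected sum construction of Section~\ref{sec:trisections} then identifies $\Tt(\emp,\Sigma)$ with the claimed iterated connected sum.

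The main technical obstacle is justifying that a standard connected-sum decomposition of the Heegaard diagram $(\A,\n)$ lifts to a connected-sum decomposition of the full trisection diagram $(\A,\n,\g)$; in the general case this would require finding separating curves simultaneously disjoint from all three cut systems, but here it is handled transparently by the identity $\g = \n$ provided by Lemma~\ref{lem:constr} for the empty link. The remaining bookkeeping (confirming trisection parameters, matching pairwise Heegaard genera, verifying that the balanced and unbalanced summands combine to a $(g;k,g,k)$--trisection) is routine.
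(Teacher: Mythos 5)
Your proof is correct and follows essentially the same route as the paper: both invoke Waldhausen's theorem to put $(\A,\n)$ in standard form, use Lemma~\ref{lem:constr} to take $\g = \n$, and then read off the connected sum decomposition into $k$ copies of the $(1;1)$--trisection of $S^1 \times S^3$ and $g-k$ copies of $\Ss_2$ from the standard diagram. You simply make more explicit the step in which the $g-1$ separating curves realizing the connected sum decomposition of the Heegaard diagram are automatically disjoint from $\g$ (because $\g = \n$), which the paper leaves implicit.
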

\begin{proof}
By Waldhausen's Theorem~\cite{Waldhausen_Heegaard-Zerlegungen_1968}, $Y_k$ has a standard Heegaard diagram, $(\A,\n)$, and by Lemma~\ref{lem:constr}, $(\A,\n,\n)$ is a trisection diagram for $\Tt(\emp,\Sigma)$.  The $k$ curves in $\A \cap \n$ give rise to $k$ summands of the standard genus one splitting of $S^1 \times S^3$, and the remaining $g-k$ pairs give rise to $g-k$ copies of $\Ss_2$.
\end{proof}

%\begin{lemma}\label{lem:stable}
%	Suppose that $(L,\Sigma)$ is an admissible pair and $\Sigma'$ is a stabilization of $\Sigma$.  Then $(L,\Sigma')$ is an admissible pair and $\Tt(L,\Sigma')$ is a 1--stabilization of $\Tt(L,\Sigma)$.
%\end{lemma}
%
%\begin{proof}
%	Stabilizing $\Sigma$ does not interfere with the link $L$ as a subset of a core of $H$, so we need only check that this process corresponds to a 1--stabilization of $\Tt(L,\Sigma)$.  Note that stabilizing $\Sigma$ yields new compressing disks $D_{\A}$ and $D_{\n}$ for $H_{\A}$ and $H_{\n}$ that meet in a single point.  However, $D_{\A}$ is also a compressing disk for $C$, and as such, $\pd D_{\A}$ bounds a disk in $H_{\g}$ as well.  It follows that $\Tt(L,\Sigma)$ is related to $\Tt(L,\Sigma')$ by a 1--stabilization.
%\end{proof}

In order to understand operations on an admissible link $L$ and Heegaard surface $\Sigma$ which will correspond to various stabilizations of $\Tt(L,\Sigma)$, we introduce several additional definitions.  Let $(L_1,\Sigma_1) \subset Y_{k_1}$ and $(L_2,\Sigma_2) \subset Y_{k_2}$ be any admissible pairs, and define the operation $\ast$ by
\[ (L_1,\Sigma_1) \ast (L_2,\Sigma_2) = (L_1 \sqcup L_2, \Sigma_1 \# \Sigma_2),\]
where the connected sum is taken so that $L_1 \sqcup L_2$ is not separated by the surface $\Sigma_1 \# \Sigma_2$.  Note that $(L_1,\Sigma_1) \ast (L_2,\Sigma_2) \subset Y_{k_1+k_2}$.

\begin{lemma}\label{lem:sum}
If $(L_1,\Sigma_1)$ and $(L_2,\Sigma_2)$ are admissible pairs, then $(L,\Sigma) = (L_1,\Sigma_1) \ast (L_2,\Sigma_2)$ is an admissible pair, and $\Tt(L,\Sigma) = \Tt(L_1,\Sigma_1) \# \Tt(L_2,\Sigma_2)$.
\end{lemma}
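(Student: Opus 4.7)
The plan has three steps: first verify that $(L,\Sigma)$ is admissible, then compute the spines of $\Tt(L,\Sigma)$ and of $\Tt(L_1,\Sigma_1)\#\Tt(L_2,\Sigma_2)$ separately, and finally invoke the fact (recorded after the definition of trisection) that a trisection is uniquely determined by its spine.

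For admissibility I would use the standard fact that a connect sum of Heegaard surfaces is a Heegaard surface of the connect sum. Writing $Y_{k_i} = H_i\cup_{\Sigma_i}H_i'$ with $L_i$ lying in a core of $H_i$, I would perform the surface connect sum at points of $\Sigma_i$ lying off of the projections of the cores that carry $L_i$. This produces a Heegaard splitting $Y_{k_1+k_2} = (H_1\natural H_2)\cup_\Sigma (H_1'\natural H_2')$ in which $L_1\sqcup L_2$ sits in a core of $H = H_1\natural H_2$ and is not separated by $\Sigma = \Sigma_1\#\Sigma_2$, so $(L,\Sigma)$ is an admissible pair. That $L$ surgers to $Y_{k_1'+k_2'}$ is immediate since the surgeries on $L_1$ and $L_2$ can be performed independently in their respective summands.

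The heart of the argument is a spine computation. By Lemma~\ref{lem:constr}, the spine of $\Tt(L_i,\Sigma_i)$ is $H_i'\cup H_i\cup (H_i)_{L_i}$, where $(H_i)_{L_i}$ denotes the handlebody obtained by Dehn-filling the compression body $H_i\setminus\nu(L_i)$ along the framings of $L_i$. Applying Lemma~\ref{lem:constr} directly to $(L,\Sigma)$, and observing that the Dehn filling is supported on $\partial\nu(L_1)\sqcup\partial\nu(L_2)$ (away from the connect-sum region), I would deduce that the spine of $\Tt(L,\Sigma)$ equals $(H_1'\natural H_2')\cup(H_1\natural H_2)\cup\bigl((H_1)_{L_1}\natural(H_2)_{L_2}\bigr)$, with all three boundary sums performed along disks lying in $\Sigma$. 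On the other hand, the trisection connect sum $\Tt(L_1,\Sigma_1)\#\Tt(L_2,\Sigma_2)$ is defined by excising a standard 4-ball neighborhood of a point on each trisection surface and gluing; the boundary of such a 4-ball intersects each handlebody piece of the spine in a disk, so gluing performs precisely a boundary sum on each of the three spine components. Thus the two spines agree piece-by-piece, and the trisections coincide.

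The main obstacle will be matching the choices in the two constructions: the points chosen on $\Sigma_i$ when forming $\Sigma_1\#\Sigma_2$ for the admissible pair $(L,\Sigma)$ must be arranged to coincide with the points used in the trisection connect sum, so that the boundary sums in both spine computations happen along literally the same disks and not merely parallel ones. Since both the construction of $\Tt(\cdot,\cdot)$ and the trisection connect sum are independent of such choices up to ambient isotopy, a sufficiently careful setup produces literal equality of spines, and the conclusion follows.
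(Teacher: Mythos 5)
Your proof is correct, and your first step (admissibility) is essentially identical to the paper's: identify a core $C$ of $H_1\natural H_2$ containing $L_1\sqcup L_2$ and note that surgery decomposes across the summands. Where you diverge is in the second claim. The paper argues top-down: it observes that the connect-sum curve $\delta\subset\Sigma_1\#\Sigma_2$ bounds compressing disks in all three handlebodies of the spine of $\Tt(L,\Sigma)$, hence is a reducing curve in the sense defined just before Figure~\ref{fig:Diags}, and therefore $\Tt(L,\Sigma)$ splits as a connected sum whose two pieces are readily identified as $\Tt(L_i,\Sigma_i)$. You argue bottom-up: you compute the spine of $\Tt(L,\Sigma)$ from Lemma~\ref{lem:constr} as a piecewise boundary connected sum $(H_1'\natural H_2')\cup(H_1\natural H_2)\cup\bigl((H_1)_{L_1}\natural(H_2)_{L_2}\bigr)$, independently compute the spine of $\Tt(L_1,\Sigma_1)\#\Tt(L_2,\Sigma_2)$, match them, and invoke that a trisection is determined by its spine. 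The two routes are two sides of the same coin (a reducing curve bounds exactly the disks along which the boundary sums are taken), but the paper's is the shorter statement while yours makes explicit the identification of the summands that the paper's phrase \emph{splitting it into the trisections $\Tt(L_1,\Sigma_1)$ and $\Tt(L_2,\Sigma_2)$} leaves implicit. Your closing worry about matching the connect-sum points is handled correctly: since connected sum of manifolds (and of trisections) is independent of the chosen disk up to isotopy, the ambiguity is harmless, though one could also dispense with it entirely by taking the paper's route and never forming $\Tt(L_1,\Sigma_1)\#\Tt(L_2,\Sigma_2)$ by a separate construction at all.
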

\begin{proof}
It is clear that the framed link $L_1 \sqcup L_2 \subset Y_{k_1+k_2}$ has the appropriate surgery.  Suppose $\Sigma_i$ bounds a handlebody $H_i$ with core $C_i$ containing $L_i$.  Then there is a core $C$ for $H_1 \natural H_2$ such that $L_1 \sqcup L_2 \subset C_1 \sqcup C_2 \subset C$, and thus $\Sigma_1 \# \Sigma_2$ is admissible as well.  For the second claim, note that the curve $\delta$ arising from the connected sum $\Sigma = \Sigma_1 \# \Sigma_2$ is a reducing curve for $\Tt(L,\Sigma)$, splitting it into the trisections $\Tt(L_1,\Sigma_1)$ and $\Tt(L_2,\Sigma_2)$.
\end{proof}

Let $U$ be a 0-framed unknot in $S^3$, and let $\Sigma_U$ be the genus one splitting of $S^3$ such that one of the solid tori bounded by $\Sigma_U$ contains $U$ as a core.  In addition, let $W$ denote the knot $S^1 \times \{\text{pt}\} \subset S^1 \times S^2$ with framing given by the fibering, and let $\Sigma_W$ be the genus one splitting of $S^1 \times S^2$ such that one of the solid tori bounded by $\Sigma_W$ contains $W$ as a core.  Note that both $(U,\Sigma_U)$ and $(W,\Sigma_W)$ are admissible pairs.  Finally, let $\Sigma_{\emp}$ be the genus one Heegaard surface for $S^3$, to be paired with the empty link.

\begin{lemma}\label{lem:g1}
The links $(W,\Sigma_W)$, $(\emp,\Sigma_{\emp})$, and $(U,\Sigma_U)$ yield the following trisections:
{\begin{enumerate}
\item $\Tt(W,\Sigma_W) = \Ss_1$.
\item $\Tt(\emp,\Sigma_{\emp}) = \Ss_2$.
\item $\Tt(U,\Sigma_U) = \Ss_3$.
\end{enumerate}}
\end{lemma}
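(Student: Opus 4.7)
The plan is to compute the trisection parameters $(g;k_1,k_2,k_3)$ in each case via Lemma~\ref{lem:constr} and then identify the resulting trisection using the classification of genus-one trisections recorded in Examples~\ref{exs:trisections}(2). Since each trisection surface here has genus $1$, and since (by the definition preceding Lemma~\ref{lem:emp}) the three unbalanced genus-one trisections of $S^4$ are exactly $\Ss_1$, $\Ss_2$, $\Ss_3$, distinguished by which index $k_i$ equals $1$, it suffices in each case to verify that the underlying 4-manifold is $S^4$ and to read off the triple $(k_1,k_2,k_3)$.

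Part (2) I would handle first, as it follows immediately from Lemma~\ref{lem:emp}: with $L = \emp$ in $Y_0 = S^3$ and $g(\Sigma_\emp) = 1$, the lemma yields the connected sum of zero copies of the standard $(1;1)$-trisection of $S^1 \times S^3$ and one copy of $\Ss_2$, namely $\Ss_2$ itself.

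For parts (1) and (3) I would apply Lemma~\ref{lem:constr} to read off the parameters and then cancel handles to identify $X_L$. In (3), the link $U$ is a single unknot in $Y_0 = S^3$, and $0$-surgery on $U$ gives $Y_1 = S^1 \times S^2$, so Lemma~\ref{lem:constr} produces a $(1;0,0,1)$-trisection. The induced handle decomposition of $X_U$ has one $0$-framed 2-handle on the unknot and one 3-handle, which cancel to give $X_U = S^4$; with $k_3 = 1$, the classification forces $\Tt(U,\Sigma_U) = \Ss_3$. In (1), the knot $W$ in $Y_1 = S^1 \times S^2$ has fiber framing, which is dual to $0$-surgery on an unknot, so $0$-surgery returns $S^3 = Y_0$; Lemma~\ref{lem:constr} therefore yields a $(1;1,0,0)$-trisection. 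The 2-handle attached to $W$ cancels the 1-handle of $Y_1$, so $X_W = S^4$, and with $k_1 = 1$ the classification forces $\Tt(W,\Sigma_W) = \Ss_1$.

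The main obstacle, to the extent there is one, is notational bookkeeping: keeping straight that under the conventions $H_\alpha = H'$, $H_\beta = H$, $H_\gamma = H_L$ of Lemma~\ref{lem:constr}, the three parameters $k$, $g - n$, and $k'$ correspond respectively to $k_1$, $k_2$, and $k_3$, so that the trisection of $S^4$ produced by $(W,\Sigma_W)$ has $k_1 = 1$ and the one from $(U,\Sigma_U)$ has $k_3 = 1$, matching $\Ss_1$ and $\Ss_3$ as defined.
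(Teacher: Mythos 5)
Your proof is correct and follows essentially the same approach as the paper: use Lemma~\ref{lem:constr} to compute the trisection parameters $(g;k_1,k_2,k_3)$ and then identify the result among the genus-one trisections classified in Examples~\ref{exs:trisections}. The extra handle-cancellation step to confirm $X_L = S^4$ is a harmless redundancy (the classification of genus-one trisections already forces this once the parameters are unbalanced), and routing part~(2) through Lemma~\ref{lem:emp} is a minor but valid variation.
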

\begin{proof}
First, note that each trisection in question has genus one.  Since framed surgery on $W \subset Y_1$ yields $S^3$, by Lemma~\ref{lem:constr}, $\Tt(W,\Sigma_W)$ is a $(1;1,0,0)$--trisection and must be $\Ss_1$. Similarly, $\Tt(\emp,\Sigma_{\emp})$ is a $(1;0,1,0)$--trisection and must be $\Ss_2$.  Finally, framed surgery on $U$ yields $Y_1$, so $\Tt(U,\Sigma_U)$ is a $(1;0,0,1)$--trisection and must be $\Ss_3$.
\end{proof}

By combining Lemmas~\ref{lem:emp},~\ref{lem:sum}, and~\ref{lem:g1}, we obtain
\begin{corollary}\label{cor:stab}
Suppose $(L,\Sigma)$ is an admissible link, with $\Tt = \Tt(L,\Sigma)$.
{\begin{enumerate}
\item $\Tt((L,\Sigma) \ast (W,\Sigma_W))$ is the 1--stabilization of $\Tt$.
\item $\Tt((L,\Sigma) \ast (\emp,\Sigma_\emp))$ is the 2--stabilization of $\Tt$.
\item $\Tt((L,\Sigma) \ast (U,\Sigma_U))$ is the 3--stabilization of $\Tt$.
\end{enumerate}}
In addition, if $\Sigma_+$ is the stabilization of $\Sigma$ (as a Heegaard surface for $Y_k$), then $(L,\Sigma_+) = (L,\Sigma) \ast (\emp,\Sigma_{\emp})$.
\end{corollary}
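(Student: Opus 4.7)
The plan is to chain together Lemmas~\ref{lem:emp}, \ref{lem:sum}, and~\ref{lem:g1} in sequence; once these are in hand, the three numbered claims essentially fall out by a substitution, and only the final assertion about $\Sigma_+$ requires a small amount of care.

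For each of items (1)--(3), I would first invoke Lemma~\ref{lem:sum} to obtain
\[
\Tt((L,\Sigma)\ast (L',\Sigma')) \;=\; \Tt(L,\Sigma)\,\#\,\Tt(L',\Sigma') \;=\; \Tt\,\#\,\Tt(L',\Sigma'),
\]
where $(L',\Sigma')$ is one of $(W,\Sigma_W)$, $(\emp,\Sigma_\emp)$, or $(U,\Sigma_U)$. Then I would apply Lemma~\ref{lem:g1}, which identifies $\Tt(W,\Sigma_W)=\Ss_1$, $\Tt(\emp,\Sigma_\emp)=\Ss_2$, and $\Tt(U,\Sigma_U)=\Ss_3$. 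The result in each case is $\Tt\,\#\,\Ss_i$, which is by definition the $i$--stabilization of $\Tt$. This is the entirety of the argument for (1), (2), and (3); the preparatory lemmas do all the work.

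For the additional claim, I would first recall that stabilizing a Heegaard surface $\Sigma$ for $Y_k$ is, by the standard identification, the connected sum with the genus one Heegaard splitting of $S^3$, which is exactly $\Sigma_\emp$. Since $L$ lies in the core of one of the handlebodies cut out by $\Sigma$, I can perform the connected sum at a point on $\Sigma$ away from $L$, so the resulting surface $\Sigma_+ = \Sigma \# \Sigma_\emp$ still admits a handlebody whose core contains $L$; thus $(L,\Sigma_+)$ is an admissible pair. Because $L$ lies entirely on one side of $\Sigma_+$, the non-separation hypothesis in the definition of $\ast$ is trivially satisfied (the second link is empty), and the pair $(L,\Sigma_+)$ coincides with $(L,\Sigma)\ast(\emp,\Sigma_\emp)$ by inspection.

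The only place requiring any care is this last identification, specifically verifying that the connected sum convention for Heegaard stabilization agrees with the convention built into the operation $\ast$. This amounts to checking that connect-summing at a point disjoint from $L$ both realizes the Heegaard stabilization of $\Sigma$ in $Y_k = Y_k\#S^3$ and preserves the property that $L$ lies in one of the two handlebody sides; both are standard. I do not expect any serious obstacle.
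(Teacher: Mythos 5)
Your proposal is correct and matches the paper's argument: the paper simply says the corollary follows ``by combining Lemmas~\ref{lem:emp},~\ref{lem:sum}, and~\ref{lem:g1},'' and your chaining of Lemma~\ref{lem:sum} (to split off the connected summand) with Lemma~\ref{lem:g1} (to identify the summand as $\Ss_i$) is precisely what is intended. Your handling of the final claim---identifying Heegaard stabilization of $\Sigma$ with the connected sum $\Sigma\#\Sigma_\emp$ performed away from $L$, and noting the non-separation condition in the definition of $\ast$ is vacuous when the second link is empty---is the right observation and closes the argument.
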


We say that two trisections $\Tt_1$ and $\Tt_2$ of a 4-manifold $X$ are \emph{2--equivalent} if there is a trisection $\Tt$ that is the result of 2--stabilizations performed on both $\Tt_1$ and $\Tt_2$.

\begin{lemma}\label{lem:equiv}
	If $\Sigma_1$ and $\Sigma_2$ are two distinct admissible surfaces for an admissible link $L$, then the trisections $\Tt(L,\Sigma_1)$ and $\Tt(L,\Sigma_2)$ are 2--equivalent.
\end{lemma}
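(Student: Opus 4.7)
The plan is to reduce the statement to a Reidemeister-Singer-type uniqueness theorem for Heegaard surfaces of the link exterior $E(L) = Y_k \setminus \nu(L)$, combined with Corollary~\ref{cor:stab}, which translates Heegaard stabilization of an admissible surface into 2--stabilization of the induced trisection.

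First I would record a local fact: an ordinary Heegaard stabilization of an admissible surface $\Sigma$ for $L$ produces another admissible surface. Indeed, the stabilizing genus-one summand can be placed inside a ball disjoint from $L$, so $L$ continues to lie in a core of the stabilized handlebody, and hence the stabilized surface $\Sigma_+$ is again admissible. Combined with the final clause of Corollary~\ref{cor:stab}, which states that $(L,\Sigma_+) = (L,\Sigma) \ast (\emp,\Sigma_\emp)$, and with part~(2) of that corollary, this yields the key reduction: $\Tt(L,\Sigma_+)$ is precisely the 2--stabilization of $\Tt(L,\Sigma)$. Iterating, any sequence of $m$ Heegaard stabilizations of $\Sigma$ (carried out through admissible surfaces) produces a new admissible pair whose trisection is the $m$-fold 2--stabilization of $\Tt(L,\Sigma)$.

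Next I would invoke Reidemeister-Singer in the appropriate form: given two admissible surfaces $\Sigma_1$ and $\Sigma_2$ for $L$, there exists a common admissible stabilization $\Sigma_\ast$, i.e.\ an admissible surface obtained from each $\Sigma_i$ by a finite number of Heegaard stabilizations. Applying the previous paragraph to each chain of stabilizations, $\Tt(L,\Sigma_\ast)$ is simultaneously a 2--stabilization of $\Tt(L,\Sigma_1)$ and of $\Tt(L,\Sigma_2)$. This is exactly the definition of 2--equivalence, completing the argument.

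The main technical point is justifying the appropriate Reidemeister-Singer statement. Admissible surfaces for $L$ are Heegaard surfaces of $Y_k$ that contain $L$ as part of the spine of a designated side; equivalently, they are Heegaard splittings of the compact 3-manifold $E(L)$ with a fixed assignment of which side is the handlebody and which side is the compression body meeting $\partial \nu(L)$. The Reidemeister-Singer theorem for compact 3-manifolds with boundary gives the common stabilization through Heegaard surfaces of $E(L)$, and one then needs to check that these stabilizations can be taken through admissible surfaces, i.e.\ that $L$ may be kept inside the spine throughout. This is the step that deserves the most care, but it follows because every stabilization in the Cerf/Morse-theoretic proof can be localized in a ball, which may be chosen disjoint from $L$; thus $L$ persists as a sublink of a spine of the handlebody side at every stage, preserving admissibility.
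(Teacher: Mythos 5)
Your proof is correct and follows essentially the same route as the paper: view $\Sigma_1,\Sigma_2$ as Heegaard surfaces for $E(L)$, apply Reidemeister--Singer to obtain a common stabilization, observe that stabilization preserves admissibility (the paper cites Lemma~\ref{lem:sum}; you argue it directly via the $\ast(\emp,\Sigma_\emp)$ operation and locality of stabilizations), and invoke Corollary~\ref{cor:stab} to translate Heegaard stabilizations into 2--stabilizations of the induced trisections. You spell out the admissibility-preservation step more explicitly than the paper does, but the argument is the same.
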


\begin{proof}
	Since both $\Sigma_1$ and $\Sigma_2$ are Heegaard surfaces for $E(L)$, they have a common stabilization $\Sigma$ by the Reidemeister-Singer Theorem~\cite{Reidemeister_Zur-dreidimensionalen_1933,Singer_Three-dimensional_1933}.  By Lemma~\ref{lem:sum}, the surface $\Sigma$ is admissible, and by Corollary~\ref{cor:stab}, $\Tt(L,\Sigma)$ can be obtained by 2--stabilizations of $\Tt(L,\Sigma_i)$.
\end{proof}

Since 2--equivalence is an equivalence relation, Lemma \ref{lem:equiv} implies that every admissible surface $\Sigma$ for an admissible link $L$ belongs to the same 2--equivalence class.  Hence, $L$ has a well-defined \emph{2--equivalence class}; namely, the 2--equivalence class of $\Tt(L,\Sigma)$.  If two admissible links $L_1$ and $L_2$ give rise to 2--equivalent trisections, we say that $L_1$ and $L_2$ are \emph{2--equivalent}.

Suppose that $L$ is an $n$-component admissible link with admissible surface $\Sigma$, so that $\Sigma$ cuts $Y_k$ into $H \cup H'$, and $L$ is isotopic into a core $C \subset H$.  As such, there is a collection of $n$ compressing disks $\{D\}$ with the property that each disk meets a unique component of $L$ once and misses the other components.  We call $\{D\}$ a set of \emph{dualizing disks}.  Note that if $(\A,\n,\g)$ is the trisection diagram for $\Tt(L,\Sigma)$ guaranteed by Lemma~\ref{lem:constr}, then the $n$ disks bounded by the $n$ curves in $\n$ that are not in $\g$ are a set of dualizing disks for $L$.

\begin{lemma}\label{lem:slide}
	If admissible links $L_1$ and $L_2$ are related by a handleslide, then $L_1$ and $L_2$ are 2--equivalent.
\end{lemma}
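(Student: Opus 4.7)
The plan is to construct a single Heegaard surface $\Sigma_+$, obtained by stabilizing some admissible surface for $L_1$, that is simultaneously admissible for $L_2$; then Corollary~\ref{cor:stab} will supply a common 2--stabilization of the two induced trisections.

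First, I would pick any admissible surface $\Sigma_1$ for $L_1$, splitting $Y_k = H_1 \cup H_1'$ with $L_1$ in a core of $H_1$, and let $a$ denote the handleslide arc joining the components $L_1^1$ and $L_1^2$. Placing $a$ in general position with $\Sigma_1$, I would appeal to the standard Heegaard-splitting fact that after finitely many stabilizations of $\Sigma_1$ one obtains $\Sigma_+$ such that $a$ is isotopic in $Y_k$ to an arc inside the handlebody $H_+$ containing $L_1$: after isotoping $a$ so that each sub-arc of $a \cap H_1'$ is boundary-parallel, each such sub-arc is absorbed into $H_+$ by a stabilization tubing along it. Since $H_+$ deformation retracts onto each of its cores, I would then isotope $a$ onto a core of $H_+$ containing $L_1$, so that the entire pair-of-pants surface mediating the handleslide lies in $H_+$. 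The Nielsen-type replacement of the core generator $L_1^1$ by $L_1^3$ produces a new core of $H_+$ containing $L_2 = (L_1 \setminus L_1^1) \cup L_1^3$, which witnesses that $\Sigma_+$ is admissible for $L_2$ as well.

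Next, I would check that $\Tt(L_1,\Sigma_+) = \Tt(L_2,\Sigma_+)$ as trisections of a common 4--manifold. By Lemma~\ref{lem:constr}, each spine has the form $H_+' \cup H_+ \cup H_{L_i}$, and the first two summands coincide. The third, $H_{L_i}$, arises from Dehn surgery on $L_i$ inside $H_+$, and because the handleslide is supported in the interior of $H_+$, it induces the canonical Kirby-calculus diffeomorphism identifying the two surgeries --- a diffeomorphism that is the identity on $\partial H_+ = \Sigma_+$ and on $H_+'$. This extends across the 2--handle cobordism to an ambient diffeomorphism $X_{L_1} \to X_{L_2}$ carrying one trisection spine onto the other, so the two trisections agree.

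Finally, Corollary~\ref{cor:stab} applied once per Heegaard stabilization used to pass from $\Sigma_1$ to $\Sigma_+$ shows that $\Tt(L_1,\Sigma_+)$ is a 2--stabilization of $\Tt(L_1,\Sigma_1)$. Thus $\Tt(L_1,\Sigma_1)$ and $\Tt(L_2,\Sigma_+)$ share the common 2--stabilization $\Tt(L_1,\Sigma_+) = \Tt(L_2,\Sigma_+)$, so they are 2--equivalent, and Lemma~\ref{lem:equiv} concludes that $L_1$ and $L_2$ are 2--equivalent. The main obstacle is the first step --- absorbing the slide arc $a$ into $H_+$ via stabilizations --- where one must isotope $a$ so that each sub-arc removed from $H_1'$ is boundary-parallel, ensuring that the complementary piece remains a genuine handlebody after each stabilization rather than picking up unwanted topology.
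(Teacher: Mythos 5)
Your overall strategy matches the paper's: find a single Heegaard surface that is simultaneously admissible for $L_1$ and $L_2$ and on which the handleslide is realized inside the link-side handlebody, observe that the resulting spines coincide (so $\Tt(L_1,\Sigma) = \Tt(L_2,\Sigma)$), and then invoke well-definedness of the 2--equivalence class via Lemma~\ref{lem:equiv}. Your verification that the spines agree is fine and matches the paper's, and the concluding appeal to Corollary~\ref{cor:stab} and Lemma~\ref{lem:equiv} is correct.

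The gap is in the step you yourself flag as the main obstacle, and it is more serious than you indicate. After absorbing $a$ into $H_+$ via stabilizations, you assert that because ``$H_+$ deformation retracts onto each of its cores'' you may isotope $a$ onto a core of $H_+$ containing $L_1$. Deformation retraction is a homotopy statement, not an isotopy statement: an arc in a handlebody can be homotoped into a spine while being isotopically nontrivial (knotted) relative to that spine, and what you need is the much stronger condition that $\Gamma = L_1 \cup a$ (and hence a neighborhood of the pair-of-pants surface) is contained in a \emph{core} of $H_+$, i.e.\ that $H_+ \setminus \nu(\Gamma)$ is a compression body. That does not follow from ``$a \subset H_+$.'' Additional care (or additional stabilizations, tracked carefully so that each new tube contributes a genuine spine edge dual to a sub-arc of $a$) is required. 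The paper sidesteps this entirely: rather than starting from an arbitrary admissible surface for $L_1$ and trying to push $a$ in, it takes $\Sigma$ to be a Heegaard surface for the graph exterior $E(\Gamma)$, so that $\Gamma$ sits in a core of $H$ by construction. The only remaining issue the paper handles by stabilization is forcing $a$ to miss a chosen set of dualizing disks for $L_1$, which is a cleaner and easier isotopy problem; it then produces explicit dualizing disks for $L_2$ by banding, which is the precise verification that $\Sigma$ is admissible for $L_2$. If you want to retain your version, you should replace the deformation-retraction step with a genuine argument (e.g.\ choosing the stabilizing tubes to run along boundary-compressing disks for the sub-arcs of $a$, so that $\Gamma$ becomes a sub-spine of $H_+$ by construction), or simply adopt the paper's choice of $\Sigma$ adapted to $\Gamma$ from the outset.
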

\begin{proof}
	If $L_i$ is an $n$--component link, then $L_1$ and $L_2$ have $n-1$ components in common and differ by a single component, $L_1' \subset L_1$ and $L_2' \subset L_2$, where a slide of $L_1'$ over another component $L'$ of $L_1$ along a framed arc $a$ yields $L_2'$.  Consider $\Gamma = L_1 \cup a$, an embedded graph with $n-1$ components, and let $\Sigma$ be a Heegaard surface cutting $S^3$ into $H \cup H'$, where $\Gamma$ is contained in a core of $H$.  Then $L_1$ is also contained in a core of $H$, and $\Sigma$ is admissible (with respect to $L_1$).  Let $\{D_1\}$ be a set of dualizing disks for $L_1$.  A priori, the arc $a$ might meet some of the disks in $\{D_1\}$; however, if this is the case, we can perform a sequence of stabilizations on $\Sigma$, after which $a$ avoids all of the disks $\{D_1\}$.  Thus, we suppose without loss of generality that $a \cap \{D_1\} = \emptyset$.
	
There is an isotopy taking $\Gamma$ into $\Sigma$, preserving the intersections of $L_i$ with the dualizing disks $\{D_1\}$, so that the framing of $\Gamma$ agrees with its surface framing in $\Sigma$.  As such, we can perform the handleslide of $L_1'$ over $L'$ along $a$ within the surface $\Sigma$, so that the resulting link $L_2$ is also contained in $\Sigma$, with framing given by the surface framing.  Let $D_1' \in \{D_1\}$ be the disk that meets $L_1'$ once, and let $D' \in \{D_1\}$ be the disk that meets $L'$ once.  There is an arc $a'$, isotopic in $\Sigma$ to an arc in $\Gamma$, that connects $D_1'$ to $D'$. See Figure~\ref{fig:slide}. Let $D_2'$ be the compressing disk obtained by banding $D_1'$ to $D'$ along $a'$.  Then $\{D_2\} = (\{D_1\} \setminus D')\cup D_2'$ is a set of dualizing disks for $L_2$. Thus, by pushing $L_2$ back into $H$, we see that $\Sigma$ is an admissible surface for $L_2$.

\begin{figure}[h!]
	\centering
	\includegraphics[width=.5\textwidth]{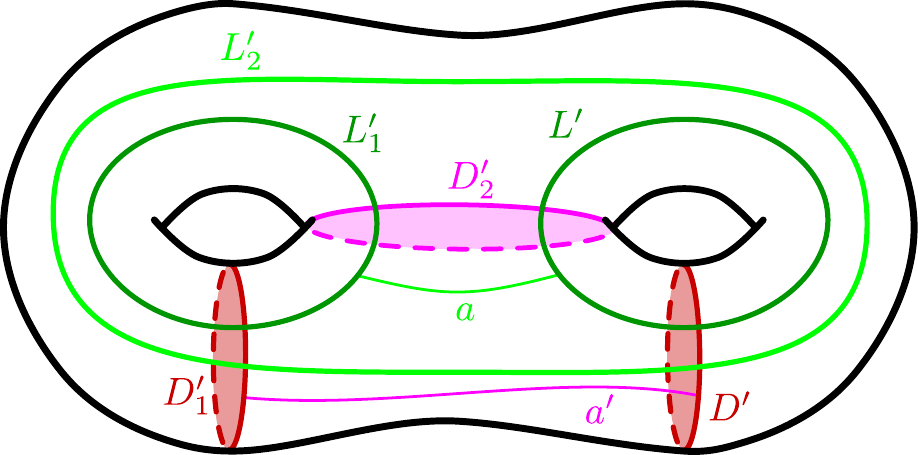}
	\caption{A schematic diagram showing how one can adjust the disk system $\{D_1\}$ dual to $L_1$ to get a disk system $\{D_2\}$ dual to $L_2$ when $L_2$ is obtained from $L_1$ via a surface framed handleslide among components $L_1'$ and $L'$ of $L_1'$.}
	\label{fig:slide}
\end{figure}

Following Lemma \ref{lem:constr}, let $H_i \cup H_i' \cup H_{L_i}$ be a spine for $\Tt(L_i,\Sigma)$.  By construction, $H_1 = H_2$ and $H_1' = H_2'$.  Finally, since $H_i$ is Dehn surgery on $L_i$ in $H_i$ , and $L_1$ and $L_2$ are related by a single handleslide, we have $H_{L_1} = H_{L_2}$.  It follows that $\Tt(L_1,\Sigma) = \Tt(L_2,\Sigma)$, and we conclude that $L_1$ and $L_2$ are 2--equivalent.
\end{proof}

For the rest of the section, we will restrict our attention to admissible links in $S^3$.  Let $U_n$ denote the zero-framed, $n$--component unlink, so $X_{U_n} = S^4$.  Recall that a \emph{standard trisection} of $S^4$ is the connected sum of copies of $\Ss_1$, $\Ss_2$, and $\Ss_3$.

\begin{lemma}\label{lem:unlink}
	Let $\Sigma$ be any admissible surface for $U_n$, then $\Tt(U_n,\Sigma)$ is standard.
\end{lemma}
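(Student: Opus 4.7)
My plan is to prove the lemma by induction on the number of components $n$ of $U_n$, with the base case $n=0$ being immediate from Lemma~\ref{lem:emp}: since we are in $S^3$, it gives $\Tt(\emptyset,\Sigma) = \Ss_2^{\#g}$, which is standard by definition. The overall strategy for the inductive step is to realize $(U_n,\Sigma)$ as a $\ast$-sum of smaller admissible pairs and apply Lemma~\ref{lem:sum}, so that standardness of $\Tt(U_n,\Sigma)$ follows from standardness of its summands.

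For $n\ge 2$, the idea is to exploit the fact that $U_n$ is split. The exterior $E(U_n)$ contains an essential $2$-sphere $S$ separating $U_n$ as $U_a\sqcup U_{n-a}$ for some $1\le a\le n-1$. I would invoke Haken's Lemma for Heegaard surfaces of compact manifolds with boundary to isotope $S$ so that $\gamma = S\cap \Sigma$ is a single simple closed curve on $\Sigma$, bounding compressing disks in both the compression body $C = H\setminus \nu(U_n)$ and the handlebody $H'$. Since the disk in $C$ can be chosen disjoint from $U_n$, the resulting sphere lies in $E(U_n)$ and induces a connected-sum decomposition $\Sigma = \Sigma_1\#\Sigma_2$; after verifying that each $\Sigma_i$ is admissible for its sublink (which amounts to tracking how the core of $H$ containing $U_n$ decomposes on each side of $S$), one obtains $(U_n,\Sigma) = (U_a,\Sigma_1)\ast(U_{n-a},\Sigma_2)$. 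Lemma~\ref{lem:sum} then gives $\Tt(U_n,\Sigma) = \Tt(U_a,\Sigma_1)\#\Tt(U_{n-a},\Sigma_2)$, and both summands are standard by the inductive hypothesis.

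The case $n=1$ admits no essential sphere in $E(U_1)$, so the split-sphere argument fails and must be handled by a separate base case. Here my plan is to invoke the classical Waldhausen-style uniqueness of Heegaard splittings of the solid torus $E(U_1) = S^1\times D^2$: every such splitting is obtained from the genus-one boundary-parallel one via Heegaard stabilizations. Consequently, an arbitrary admissible $\Sigma$ for $U_1$ arises from $\Sigma_U$ by $g(\Sigma)-1$ Heegaard stabilizations, and by the final sentence of Corollary~\ref{cor:stab} this translates into the identity $(U_1,\Sigma) = (U_1,\Sigma_U)\ast (\emptyset,\Sigma_\emptyset)^{\ast (g(\Sigma)-1)}$. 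Applying Lemma~\ref{lem:sum} together with Lemma~\ref{lem:g1} produces $\Tt(U_1,\Sigma) = \Ss_3\#\Ss_2^{\#(g(\Sigma)-1)}$, which is standard.

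The main obstacle is the invocation of Haken's Lemma and, more delicately, ensuring that the essential sphere $S$ it produces is compatible with the admissible structure of $\Sigma$---namely, that each piece $\Sigma_i$ is itself an admissible surface for its corresponding sublink. Making this work rigorously requires attention to which side of $S$ contains which components of $U_n$, and potentially an additional isotopy putting $S$ into a standard position relative to a dualizing disk system for $U_n$ so that the handlebody $H$ decomposes along $S$ into two handlebodies whose cores contain $U_a$ and $U_{n-a}$ respectively.
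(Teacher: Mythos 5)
Your proposal matches the paper's argument in all essentials: both induct on complexity, both invoke Haken's Lemma to reduce the split surface along the decomposing sphere in $E(U_n)$ when $n\ge 2$ (feeding the connected-sum decomposition into Lemma~\ref{lem:sum}), and both handle the $n=1$ case via the classical fact that every Heegaard splitting of the solid torus is a stabilization of the genus-one one, converting Heegaard stabilizations to 2-stabilizations via Corollary~\ref{cor:stab}. The only cosmetic difference is that the paper inducts on $(n,g)$ in dictionary order (destabilizing one genus at a time when $n=1$) whereas you separate $n=1$ as its own base case and destabilize all the way down in one step; these are equivalent.
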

\begin{proof}
We induct on $(n,g)$ with the dictionary ordering.  If $n=1$, then $E(U_1)$ is a solid torus.  If $g=1$, then $\Sigma = \Sigma_U$, so that $\Tt(U_1,\Sigma_U) = \Ss_3$ by Lemma~\ref{lem:g1}.  If $n=1$ and $g > 1$, then $\Sigma$ is stabilized~\cite{lei} (see also~\cite{scharlemann-thompson}), which means that $\Tt(U_1,\Sigma)$ is 2--stabilized by Corollary~\ref{cor:stab}, and as such, $\Tt(U_1,\Sigma)$ is standard by induction.

In general, note that the Heegaard genus of an $n$--component unlink is $n$; thus $g \geq n$ for all possible pairs $(n,g)$.  For $n>1$, we have that $E(U)$ is reducible, and so Haken's Lemma~\cite{Haken_Some_1968} implies that $\Sigma$ is reducible, splitting into the connected sum of genus $g_1$ and $g_2$ surfaces $\Sigma_1$ and $\Sigma_2$, where $\Sigma_i$ is a Heegaard surface for $E(U_{n_i})$.  Then $\Tt(U_n,\Sigma) = \Tt(U_{n_1},\Sigma_1) \# \Tt(U_{n_2},\Sigma_2)$, where $(n_i,g_i) < (n,g)$.  Since both summands are standard trisections by induction, it follows that $\Tt(U_n,\Sigma)$ is also standard, completing the proof.
\end{proof}

%If $L$ is a framed $n$--component link in $S^3$ that admits an integral Dehn surgery yielding $Y_n$, then we call $L$ a \emph{R-link}. The only possible framing of such a surgery is the canonical zero-framing on $L$ that is induced, for each component, by a Seifert surface for that component in $S^3$.  Note that $U_n$ is an R-link, as is any link that is handleslide equivalent to $U_n$.  We say that such an $R$-link $L$ has \emph{Property R}.  If $L\sqcup U_r$ has Property R for some integer $r$, then we say that $L$ has \emph{Stable Property R}.

A trisection $\Tt$ is said to be \emph{2--standard} if it becomes standard after some number of 2-stabilizations.  Similarly, $\Tt$ is $\{2,3\}$--standard if it becomes standard after some number of 2- and 3-stabilizations.

\begin{proof}[Proof of Theorem~\ref{thm:equiv}]
Suppose $L$ has Property R. By Lemma \ref{lem:slide}, $L$ and $U_n$ are 2--equivalent links.  Thus, $\Tt(L,\Sigma)$ is 2--equivalent to some trisection coming from $U_n$, but all trisections induced by $U_n$ are standard by Lemma \ref{lem:unlink}, and thus $\Tt(L,\Sigma)$ becomes standard after a finite sequence of 2--stabilizations.
	
If $L$ has Stable Property R, then $L \sqcup U_n$ has Property R for some $n$, and thus $\Tt((L,\Sigma) \ast (U,\Sigma_U) \ast \dots \ast (U,\Sigma_U))$ is 2--standard by the above arguments.  By Lemma~\ref{lem:g1} and Corollary~\ref{cor:stab},
\[\Tt((L,\Sigma) \ast (U,\Sigma_U) \ast \dots \ast (U,\Sigma_U)) = \Tt(L,\Sigma) \# \Ss_3 \# \dots \# \Ss_3;\]
hence $\Tt(L,\Sigma)$ is $\{2,3\}$--standard.

Finally, if the trisection $\Tt(L,\Sigma)$ is $\{2,3\}$--standard, then there exist integers $s$ and $t$ such that the connected sum of $\Tt(L,\Sigma)$ with $s$ copies of $\Ss_2$ and $t$ copies of $\Ss_3$ is standard.  Let $(L_*,\Sigma_*)$ be the admissible pair given by
\[ (L_*,\Sigma_*) = (L,\Sigma) \ast \underbrace{(\emp,\Sigma_{\emp}) \ast \dots \ast (\emp,\Sigma_{\emp})}_s  \ast \underbrace{(U,\Sigma_U) \ast \dots \ast (U,\Sigma_U)}_t.\]
By assumption, $\Tt(L_*,\Sigma_*)$ is standard, so by Lemma~\ref{lem:stdslide}, the link $L_*$ has Stable Property R.  But by definition of $\ast$, we have $L_* = L \sqcup U_t$, and thus $L$ also has Stable Property R, completing the proof.
\end{proof}

%We now state the family of conjectures which motivate the preceding theorem and much of the remainder of this article.

%\begin{GPRC}
%	Let $L$ be a link in $S^3$.
%	\begin{description}
%		\item[Strong] If $L$ is an R-link, then $L$ has Property R.
%		\item[\hspace{.035in}Stable] If $L$ is an R-link, then $L$ has stable Property R.
%		\item[\hspace{.0875in}Weak] If $L$ is an R-link, then $L$ has weak Property R.
%	\end{description}
%\end{GPRC}

%Clearly, (1) implies (2), which implies (3).  These conjectures have a number of intriguing connections to other open questions.

%\begin{proposition}[\cite{Scharlemann_Generalized_2008}, Prop 2.4]
%	The Weak Generalized Property R Conjecture is equivalent to the conjecture that any homotopy four-sphere (resp., 4-ball) that can be built without 3--handles is diffeomorphic to $S^4$ (resp., $B^4$).
%\end{proposition}

%In particular, the Weak GPRC is equivalent to the Smooth Poincar\'e Conjecture for homotopy four-spheres built without 3--handles~\cite{Gompf-Scharlemann-Thompson_Fibered_2010}.  Scharlemann also outlines an approach to the Sch\"onflies Conjecture that is closely related to the trisection viewpoint that we adopt.  In particular, see Propositions~5.1~and~5.3 of~\cite{Scharlemann_Generalized_2008}, as well as Subsection~\ref{subsec:critical} below.

%%%%%%%%%%%%%%%%%%%%%%%%%%%%%%%%%%%%%%%%%%%%%%%%%%%%%%%%%%%%%%%%%%%%%%%%%%%%
%%%%%%%%%%%%%%%%%%%%%%%%%%%%%%%%%%%%%%%%%%%%%%%%%%%%%%%%%%%%%%%%%%%%%%%%%%%%
\section{Trisecting the Gompf-Scharlemann-Thompson Examples}\label{sec:GST}
%%%%%%%%%%%%%%%%%%%%%%%%%%%%%%%%%%%%%%%%%%%%%%%%%%%%%%%%%%%%%%%%%%%%%%%%%%%%
%%%%%%%%%%%%%%%%%%%%%%%%%%%%%%%%%%%%%%%%%%%%%%%%%%%%%%%%%%%%%%%%%%%%%%%%%%%%
Although this view has changed in the past and may change in the future, it is the current view of the authors that the GPRC is likely false.  In light of this opinion, we will outline the first steps one might take to employ Theorem~\ref{thm:equiv} to disprove the GRPC or the Stable GPRC.  Let $L$ be an R-link with admissible surface $\Sigma$.  By Theorem~\ref{thm:equiv}, if $\Tt(L,\Sigma)$ is \textbf{not} $\{2\}$--standard, then $L$ fails to have Property R.  Thus, we in this section we will show how to take the most promising potential counterexamples to the GPRC and construct admissible surfaces and their corresponding trisections.

The possible counterexamples mentioned in the previous paragraph were produced by Gompf-Scharlemann-Thompson~\cite{Gompf-Scharlemann-Thompson_Fibered_2010}, building on work of Akbulut-Kirby~\cite{Akbulut-Kirby_A-potential_1985}.   We will call this family the \emph{GST links}.  In order to describe the construction of the GST links, we need several preliminary details.  Let $Q$ denote the square knot, the connected sum of the right-handed and left-handed trefoil knots, and let $F$ denote the genus two fiber surface for the square knot.  In~\cite{Scharlemann_Proposed_2012}, Scharlemann depicted an elegant way to think about the monodromy corresponding to the fibration of $E(Q)$ by $F$:  We may draw $F$ as a topological annulus $A$, and such that
\begin{itemize}
\item A disk $D$ has been removed from $A$,
\item each component of $\pd A$ is split into six edges and six vertices, and
\item opposite inside edges and opposite outside edges of $\pd A$ are identified to form $F$.
\end{itemize}

With respect to $A$, the monodromy $\varphi$ is a 1/6th clockwise rotation of $A$, followed by an isotopy of $D$ returning it to its original position.  Let $Y_Q$ be the closed 3-manifold obtained by 0-surgery on $Q$, so that $Y_Q$ is a fibered 3-manifold with fiber surface $\wh F$ and monodromy $\wh \varphi$, called the \emph{closed monodromy} of $Q$. Note that the monodromy $\wh F$ is an honest 1/6th rotation of the annulus in Figure~\ref{fig:Hexulus}, since, in this case, the puncture has been filled in by the Dehn surgery.  Details can be found in~\cite{Gompf-Scharlemann-Thompson_Fibered_2010} and~\cite{Scharlemann_Proposed_2012}, where the following lemma is first proved.

\begin{lemma}
For every rational number $p/q$ with $q$ odd, there is a family $\{V_{p/q},V'_{p/q},V''_{p,q}\}$ of curves contained in $\wh F$ that are permuted by $\wh \varphi$.
\end{lemma}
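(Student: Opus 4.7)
The plan is to construct the family explicitly inside the hexagonal annulus model for $\wh F$ and then directly verify the permutation. Since $\wh\varphi$ has order $6$, an orbit has size dividing $6$; the intended size here is exactly $3$, with $\wh\varphi$ cycling $V_{p/q} \to V'_{p/q} \to V''_{p/q} \to V_{p/q}$ and $\langle \wh\varphi^3\rangle$ stabilizing each curve setwise.

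First, I would fix coordinates on $A$. Parametrize $A$ as $(\theta, r) \in (\R/2\pi\Z) \times [0,1]$ so that $\wh\varphi$ acts by $(\theta, r) \mapsto (\theta + \pi/3, r)$ and so that the boundary edge identifications match the antipodal rule $(\theta, i) \sim (\theta + \pi, i)$ for $i = 0, 1$. These coordinates should match the Hexulus picture. In particular, the involution $\wh\varphi^3$ restricts on $A$ to the antipodal rotation $(\theta, r) \mapsto (\theta + \pi, r)$, which is built into the boundary identifications by construction.

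Next, for each $p/q$ with $\gcd(p, q) = 1$ and $q$ odd, take $\gamma_{p/q}$ to be a straight arc in $A$ of slope $p/q$ in the $(\theta, r)$-coordinates (wrapping $p$ times around $\theta$ while going $q$ times between $r = 0$ and $r = 1$ in the universal cover). Close $\gamma_{p/q}$ using the boundary identifications to produce a simple closed curve $V_{p/q} \subset \wh F$. The oddness of $q$ is exactly what is needed here: it forces the arc, when extended across the boundary circles via the antipodal identification, to close up into a single connected simple closed curve rather than into a pair of parallel components. Define $V'_{p/q} := \wh\varphi(V_{p/q})$ and $V''_{p/q} := \wh\varphi^2(V_{p/q})$.

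Finally, I would verify $\wh\varphi^3(V_{p/q}) = V_{p/q}$ setwise. By step one, $\wh\varphi^3$ is the antipodal map on $A$, and this map is precisely the one that closes $\gamma_{p/q}$ into $V_{p/q}$ through the boundary identifications; thus the orbit of $V_{p/q}$ under $\wh\varphi$ has size dividing $3$, and a short check rules out size $1$ for generic slopes. The main obstacle is the combinatorial verification that $V_{p/q}$ is indeed a single embedded simple closed curve for every $q$ odd, together with the companion check that the construction fails (or yields a different kind of family) when $q$ is even. I would pin this down by chasing the explicit Hexulus figure and the edge identifications as recorded in~\cite{Gompf-Scharlemann-Thompson_Fibered_2010} and~\cite{Scharlemann_Proposed_2012}; once that combinatorial step is in hand, equivariance under $\wh\varphi^3$ and the cyclic action of $\wh\varphi$ on the orbit are essentially forced by the chosen coordinates.
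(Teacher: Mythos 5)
Your approach is genuinely different from the paper's, so let me compare the two routes. The paper does not build $V_{p/q}$ by drawing a slope-$p/q$ curve directly in the hexagonal annulus. Instead it invokes Scharlemann's observation that $\wh F$ is a 3-fold branched cover $\rho \colon \wh F \to S$ of a 4-punctured sphere (pillowcase), takes $c_{p/q}$ to be the standard slope-$p/q$ curve on $S$ (unique up to isotopy, a classical fact about pillowcases), and defines $\{V_{p/q}, V'_{p/q}, V''_{p/q}\} = \rho^{-1}(c_{p/q})$. The permutation by $\wh\varphi$ then follows because $\wh\varphi$ is compatible with the branched covering. This outsources exactly the steps that are hardest in your version: the uniqueness of the slope-$p/q$ isotopy class, the fact that the preimage is three simple closed curves (rather than one curve of higher multiplicity or a curve with self-intersections), and the $\wh\varphi$-equivariance.

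The gap in your proposal is the one you yourself flag. You construct $\gamma_{p/q}$ as ``a straight arc of slope $p/q$'' in the annulus and then assert that the boundary identifications close it up into a single embedded simple closed curve precisely when $q$ is odd, but you defer the verification. That verification is not a routine detail: it is the substantive content of the lemma. Moreover, the phrase ``slope $p/q$ in the $(\theta,r)$-coordinates'' is not yet well-defined, because the hexagonal annulus model requires the arc to reflect or continue through the antipodal boundary identifications, and the combinatorics of how those continuations assemble (how many components, embedded or not) is exactly what determines whether the curve exists. The paper's branched-cover formulation is, in effect, the device that makes this combinatorics tractable: on the pillowcase the slope-$p/q$ curve is manifestly a single embedded circle, and everything else is pulled back. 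Your observation that $\wh\varphi^3$ restricts on $A$ to the same antipodal map as the edge identifications is a correct and useful heuristic for why $\wh\varphi^3$ should stabilize $V_{p/q}$, but by itself it does not establish that $V_{p/q}$ is well-defined, nor does it rule out that $\wh\varphi$ fixes $V_{p/q}$ (so the ``orbit of size exactly 3'' claim also needs an argument). If you want to make your route rigorous, the cleanest repair is probably to re-introduce the branched covering implicitly by working in the torus double cover of the pillowcase and lifting; at that point you would essentially have reconstructed the paper's argument.
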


\begin{proof}
We may subdivide $A$ into six rectangular regions as shown in Figure~\ref{fig:Hexulus}.  It is proved in~\cite{Scharlemann_Proposed_2012} that $\wh F$ is a 3-fold branched cover of a 2-sphere $S$ with four branch points.  By naturally identifying $S$ with 4-punctured sphere constructed by gluing two unit squares along their edges, there is a unique isotopy class of curve $c_{p/q}$ with slope $p/q$ in $S$.  Let $\rho:F \rightarrow S$ denote the covering map.  Scharlemann proves that $\rho^{-1}(c_{p,q}) = \{V_{p/q},V'_{p/q},V''_{p,q}\}$, and these curves are permuted by $\wh \varphi$.
\end{proof}

Figure~\ref{fig:Hexulus} shows the three lifts, $V_{3/7}$, $V_{3/7}'$, and $V_{3/7}''$, of the rational curve $3/7$ to the fiber $F$ of the square knot.  Note that $\wh \varphi\, ^6$ is the identity map, and $\wh \varphi\,^3$ maps $V_{p/q}$ to itself but with reversed orientation.

\begin{figure}[h!]
	\centering
	\includegraphics[width=.4\textwidth]{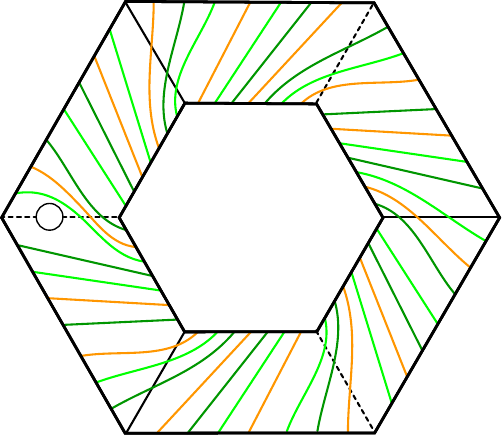}
	\caption{The curves $V_{3/7}$, $V_{3/7}'$, and $V_{3/7}''$ on the genus two fiber $F$ for the square knot.}
	\label{fig:Hexulus}
\end{figure}

Finally, we can define the GST links.  The next lemma is also from~\cite{Scharlemann_Proposed_2012}.

\begin{lemma}
The GST link $L_n$ is handleslide equivalent to $Q \cup V_{n/2n+1}$.  The R-link $L_n$ has Property R for $n = 0,1,2$ and is not known to have Property R for $n \geq 3$.
\end{lemma}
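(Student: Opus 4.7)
The plan is as follows. First, I would start from the original Akbulut--Kirby/Gompf--Scharlemann--Thompson handle description of $L_n$: it is the $0$-framed square knot $Q$ together with a second $0$-framed component $K_n$ arising as a band-sum involving arcs in the fiber surface $F$, with the bands carrying $n$ full twists along a specified direction. The first step is to perform $0$-surgery on $Q$ and work inside the closed fibered manifold $Y_Q$, in which $Q$ bounds a disk in the attached $2$-handle. Handleslides of $K_n$ over $Q$ allow one to isotope the image of $K_n$ into the closed fiber $\wh F$, and because the $0$-framing of $Q$ agrees with the surface framing coming from $F$, these handleslides preserve the $0$-framing on $K_n$.

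Next, I would identify the resulting curve on $\wh F$ as one of the three lifts $V_{n/(2n+1)}$, $V'_{n/(2n+1)}$, $V''_{n/(2n+1)}$. For this, I would use the $3$-fold branched covering $\rho\colon\wh F\to S$ from the previous lemma: a direct intersection-number computation with standard meridional and longitudinal curves of the $4$-punctured sphere $S$ should show that $\rho(K_n)$ realizes slope $n/(2n+1)$, so that $K_n$ lifts to a curve in $\{V_{n/(2n+1)},V'_{n/(2n+1)},V''_{n/(2n+1)}\}$. Applying a suitable power of the monodromy $\wh\varphi$, which permutes these three curves and extends over $Y_Q$, then allows one to replace $K_n$ by $V_{n/(2n+1)}$ itself via further handleslides. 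The main obstacle is exactly this intersection/framing book-keeping: tracking the $n$ twists of the band-sum through the surgery on $Q$ and through the hexagonal identification that defines $\wh F$ and $\wh\varphi$, and verifying that one recovers precisely slope $n/(2n+1)$ rather than some nearby rational with odd denominator. This is the technical core of Scharlemann's argument in \cite{Scharlemann_Proposed_2012}.

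For the Property R claims: when $n=0$, the curve $V_{0/1}$ lies, after a small isotopy on $\wh F$, in the complement of a parallel copy of $Q$ and bounds a compressing disk there, so a short handleslide sequence reduces $L_0$ to a split $0$-framed $U_2$. The $n=1$ and $n=2$ cases were worked out explicitly in \cite{Gompf-Scharlemann-Thompson_Fibered_2010}, where finite handleslide sequences of $V_{n/(2n+1)}$ over parallel copies of $Q$ are exhibited terminating at $U_2$; these are essentially diagrammatic Kirby calculus computations, lengthy but conceptually routine once the curves are placed on $\wh F$. The negative assertion for $n\geq 3$ is simply the current state of the art, and indeed the purpose of the remainder of the paper's program is to explain how one might obstruct such a reduction via trisections and Theorem~\ref{thm:equiv}.
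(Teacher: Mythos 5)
The paper does not actually prove this lemma; it cites the result directly from Scharlemann~\cite{Scharlemann_Proposed_2012} (for the identification of $L_n$ with $Q\cup V_{n/(2n+1)}$) and from Gompf--Scharlemann--Thompson~\cite{Gompf-Scharlemann-Thompson_Fibered_2010} (for the Property R statements in the cases $n=0,1,2$). Your sketch correctly reconstructs the strategy of those references: pass to the $0$-surgered manifold $Y_Q$ so that handleslides over $Q$ become isotopies in the closed fibered manifold, push the second component onto the closed fiber $\wh F$ using that the $0$-framing of $Q$ is the fiber framing, and then use the $3$-fold branched covering $\rho\colon\wh F\to S$ to read off the slope $n/(2n+1)$ of the resulting curve. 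You also correctly attribute the $n=0,1,2$ handleslide reductions to~\cite{Gompf-Scharlemann-Thompson_Fibered_2010} and note that $n\geq 3$ is open. The one caveat is that you explicitly defer the slope computation (``tracking the $n$ twists of the band-sum through the surgery on $Q$ and through the hexagonal identification'') to Scharlemann's paper rather than carrying it out; this is exactly the part of the argument that justifies the specific rational number $n/(2n+1)$ rather than some other odd-denominator slope, so as written your proposal is a faithful outline of the cited proof rather than an independent verification. Since the paper itself offers no more than a citation, this is an acceptable posture, but you should be aware that the substance of the lemma lives in that deferred computation.
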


For ease of notation, let $V_n = V_{n/2n+1}$ and $V_n' = V'_{n/2n+1}$, so that $L_n = Q \cup V_n$.  Two links $L$ and $L'$ are said to be \emph{stably handleslide equivalent} or just \emph{stably equivalent} if there are integers $n$ and $n'$ so that $L \sqcup U_n$ is handleslide equivalent to $L' \sqcup U_{n'}$.  While we can find admissible surfaces for $L_n$, there is a simpler construction for a family of links $L_n'$ stably equivalent to $L_n$ for each $n$, and we note a link $L$ has Stable Property R if and only if every link stably equivalent to $L$ has Stable Property R.

\begin{lemma}
The link $L_n = Q \cup V_n$ is stably equivalent to $L'_n = V_n \cup V_n'$.
\end{lemma}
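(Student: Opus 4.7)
The plan is to introduce the intermediate three-component link $L''_n := Q \cup V_n \cup V'_n$ and to prove separately that $L_n \sim_{\mathrm{stab}} L''_n$ and $L'_n \sim_{\mathrm{stab}} L''_n$; transitivity of stable handleslide equivalence then yields the lemma.

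The core observation, which drives both stable equivalences, is that $V_n$ and $V'_n$ are ambient isotopic as framed knots in $Y_Q$. This follows from the mapping torus description $Y_Q = \wh F \times [0,1]/((x,1) \sim (\wh \varphi(x),0))$: the curve $V_n \times \{0\}$ slides through the fibers to $V_n \times \{1\} = V'_n \times \{0\}$, and since the isotopy stays in fibers, the framing induced by $\wh F$ is preserved. Via the standard dictionary between isotopies in a surgered manifold and handleslides over the surgery components, this isotopy translates into a sequence of handleslides of $V'_n$ over $Q$ in $S^3$ converting $V'_n$ into a $0$-framed parallel push-off $V_n^*$ of $V_n$ lying on $\wh F$. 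Here I use that the prescribed $0$-framings on the components of $L_n$ agree with the surface framings coming from $\wh F$---a framing computation implicit in Scharlemann's construction and in the fact that $\wh F$ is a Seifert surface for $Q$.

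For the first equivalence $L''_n \sim_{\mathrm{stab}} L_n$, the previous paragraph converts $L''_n$ into $Q \cup V_n \cup V_n^*$ by handleslides. Since $V_n$ and $V_n^*$ cobound an annulus in $\wh F$, a single further handleslide of $V_n^*$ over $V_n$ along a trivial band in this annulus (together with ambient isotopy in $S^3$) converts $V_n^*$ into a split $0$-framed unknot $U$. Hence $L''_n$ is handleslide equivalent to $L_n \sqcup U$, giving $L_n \sim_{\mathrm{stab}} L''_n$.

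For the second equivalence $L''_n \sim_{\mathrm{stab}} L'_n$, the roles of $Q$ and $V'_n$ swap and the argument is more delicate: we wish to eliminate $Q$ from $L''_n$ in the presence of $V_n \cup V'_n$. The plan is to exploit the cyclic $\wh \varphi$-symmetry together with the fact that $Q = \partial \wh F$ is null-homologous in $\wh F$, finding arcs on $\wh F$ along which to slide $Q$ over $V_n$ and $V'_n$ so as to reduce $Q$ to a null-homologous curve on $\wh F$ that bounds a subsurface disjoint from $V_n \cup V'_n$, and is therefore isotopic (possibly after stabilization) to a split $0$-framed unknot. The main obstacle of the proof is precisely this second step: unlike the first equivalence, which is supplied directly by the fiber-preserving monodromy isotopy inside $Y_Q$, removing $Q$ requires a careful analysis of the homological relations in $H_1(\wh F)$ among the orbit curves $\{V_n, V'_n, V''_n\}$ and the distinguished binding $Q$, together with a verification that the induced framings in the slide sequence match the prescribed $0$-framings.
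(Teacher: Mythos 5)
Your proposal follows the same architecture as the paper's proof: introduce the intermediate three-component link $Q \cup V_n \cup V_n'$, then show it is stably equivalent to each of $L_n$ and $L'_n$. Your first sub-equivalence (eliminating $V_n'$) essentially reproduces the paper's argument: use the monodromy to see $V_n'$ is isotopic to $V_n$ in $Y_Q$, translate that isotopy into slides of $V_n'$ over $Q$ in $S^3$, and then slide the resulting parallel copy over $V_n$ to produce a split unknot. That part is fine.

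The second sub-equivalence, however, is where you stop short. You correctly identify that the task is to eliminate $Q$ by sliding it over $V_n$ and $V_n'$, and you correctly flag that this should follow from a homological/surface-theoretic fact, but you then describe it as the ``main obstacle'' and leave it as a plan requiring ``a careful analysis.'' The paper closes this gap with a short, concrete observation: $V_n$ and $V_n'$ are \emph{homologically independent} in the genus-two fiber $F$. Consequently, cutting $F$ along $V_n \cup V_n'$ yields a planar surface whose boundary components are $Q$ together with two copies each of $V_n$ and $V_n'$; sliding $Q$ over these boundary components within that planar surface converts $Q$ into a curve bounding a disk disjoint from $V_n \cup V_n'$, i.e.\ a split unknot. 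That single observation is what you are missing. Also, a small slip: you write $Q = \partial \wh F$, but $\wh F$ is the \emph{closed} fiber in $Y_Q$; you mean $Q = \partial F$ where $F$ is the genus-two fiber surface in $S^3$. Finally, the framing bookkeeping you raise as a concern is handled implicitly by Scharlemann's construction (the surface framing of $F$ agrees with the $0$-framing), so it does not constitute a genuine obstruction here.
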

\begin{proof}
We will show that both links are stably equivalent to $Q \cup V_n \cup V_n'$.  Since $\wh \varphi(V_n) = V_n'$, we have that $V'_n$ is isotopic to $V_n$ in $Y_Q$.  Carrying this isotopy into $S^3$, we see that after some number of handleslides of $V_n'$ over $Q$, the resulting curve $C'$ is isotopic to $V_n$.  Now $C'$ can be slide over $V_n$ to produce a split unknot $U_1$, and $Q \cup V_n \cup V_n'$ is handleslide equivalent to $L_n \sqcup U_1$.  On the other hand, $V_n$ and $V_n'$ are homologically independent in the genus two surface $F$.  Thus, there is a sequence of slides of $Q$ over $V_n$ and $V_n'$ converting it to a split unknot, so $Q \cup V_n \cup V_n'$ is handleslide equivalent to $L_n' \sqcup U_1$ as well.
\end{proof}

Next, we will define an admissible surface for $L_n'$.  Consider a collar neighborhood $F \X I$ of $F$, and let $N \subset S^3$ denote the embedded 3-manifold obtained by crushing $\pd F \X I$ to a single curve.  Letting $\Sigma = \pd N$, we see that $\Sigma$ is two copies of $F$, call them $F_0$ and $F_1$, glued along the curve $Q$.

\begin{lemma}\label{lem:admiss}
Consider $L_n'$ embedded in $F_0$, and push $L_n'$ slightly into $N$.  Then $\Sigma$ is an admissible surface for $L_n'$.
\end{lemma}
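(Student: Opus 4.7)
My plan is to verify the two defining properties of an admissible pair: that $\Sigma$ is a Heegaard surface for $S^3$ splitting it into two handlebodies with $N$ on one side, and that $L_n'$ sits in a one-complex core of $N$.

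For the first property, the region $N$ is a regular neighborhood of the Seifert surface $F$ in $S^3$, so it is homeomorphic to the trivial $I$-bundle $F \times I$ since $F$ is orientable; as $\pi_1(F)$ is free of rank $2g(F) = 4$, this is a genus-$4$ handlebody. For the complementary region $H' = \overline{S^3 \setminus N}$, since $N$ contains a tubular neighborhood of $Q = \pd F$, we have $H' \subset E(Q)$; more precisely, $H'$ is the result of removing an open regular neighborhood of the properly embedded surface $F' = F \cap E(Q) \cong F$ from $E(Q)$. Because $Q$ is a fibered knot with fiber $F$, this cut exterior is the mapping cylinder $F' \times I$, again a genus-$4$ handlebody. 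An Euler characteristic computation gives $\chi(\Sigma) = 2\chi(F) = -6$, so $\Sigma$ is a closed orientable surface of genus $4$ and is a Heegaard surface for $S^3$.

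For the second property, since $N \cong F \times I$ deformation retracts onto its central fiber $F \times \{1/2\} \cong F$, we may isotope $L_n'$ (pushed into $N$ from $F_0 \subset \Sigma$) within $N$ to a copy of $V_n \cup V_n'$ lying on this central fiber. It thus suffices to produce a one-complex spine $\Gamma \subset F$ containing $V_n \cup V_n'$. By the preceding lemma, $V_n$ and $V_n'$ are homologically independent, disjoint, essential simple closed curves in $F$; cutting $F$ along them yields a connected planar surface $F''$ with five boundary components and $\chi(F'') = -3$. I will choose three pairwise disjoint, properly embedded arcs $a_1, a_2, a_3$ in $F''$ with all endpoints on the four ``cut'' boundary circles, so that $\Gamma = V_n \cup V_n' \cup a_1 \cup a_2 \cup a_3$ has $F \setminus \Gamma \cong \pd F \times [0,1)$. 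Then $\Gamma$ is a one-complex spine of $F$ containing $L_n'$, and lifting $\Gamma$ into the central fiber realizes $L_n'$ inside a core of $N$.

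The main obstacle is the explicit construction of the arcs: since $\chi(\Gamma) = -3$ forces $\chi(F \setminus \Gamma) = 0$, only connectedness of $F \setminus \Gamma$ needs to be verified, which reduces to a combinatorial exercise on the $5$-holed sphere. For instance, taking $a_1$ to join the two boundary copies of $V_n$, $a_2$ to join the two copies of $V_n'$, and $a_3$ to connect the resulting two merged boundaries, one checks directly that $F''$ is reduced to an annular collar of $\pd F$ under cutting along $a_1 \cup a_2 \cup a_3$, completing the verification that $\Sigma$ is admissible.
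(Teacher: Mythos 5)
Your proof is correct, but the second half takes a genuinely different route from the paper. The first half---showing that both $N$ and $\overline{S^3 \setminus N}$ are genus-four handlebodies via the fibration of $E(Q)$, so that $\Sigma$ is a Heegaard surface for $S^3$---is essentially the paper's argument (the paper phrases it via the open book with binding $Q$ and page $F$; your description of $H'$ as a neighborhood of $F$ removed ``from $E(Q)$'' is slightly loose since $Q \subset \pd N$, but the conclusion $H' \cong F \times I$ is the same).

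For the second half, the paper does not construct a spine. It instead invokes the dualizing-disk criterion developed in Section~\ref{sec:R-stab}: to see that $L_n'$ is contained in a core of $N$, it suffices to exhibit, for each component of $L_n'$, a compressing disk of $N$ that meets that component once and misses the other. These are realized as vertical product disks $D(a_0)$, $D(a_0')$ over arcs $a_0, a_0' \subset F_0$ with $|a_0 \cap V_n| = 1$, $a_0 \cap V_n' = \emptyset$, and symmetrically for $a_0'$. You instead build an explicit spine $\Gamma = V_n \cup V_n' \cup a_1 \cup a_2 \cup a_3$ of the fiber $F$; since $N$ is a regular neighborhood of $F$, any spine of $F$ is a core of $N$, so your argument also proves the statement. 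Your route requires more bookkeeping (cutting $F$ along $V_n \cup V_n'$, checking connectedness---which does follow from homological independence, since if $V_n'$ were separating in $F$ cut along $V_n$ then $[V_n']$ would be a multiple of $[V_n]$---and then completing to a spine on the resulting five-holed sphere). The paper's two-arc argument is shorter, and, more to the point, it directly produces the $\beta$-curves $\pd D(a_0)$, $\pd D(a_0')$ used in the trisection diagram for $\Tt(L_n',\Sigma)$ constructed immediately after the lemma; if you take the spine approach, you would still need to locate those dualizing disks separately in order to build the diagram.
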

\begin{proof}
First, $F \X I$ is a genus four handlebody, as is $N$, since $N$ is obtained by crushing the vertical boundary of $F \X I$.  Moreover, since the exterior $E(Q)$ is fibered with fiber $F$, we may view this fibering as an open book decomposition of $S^3$ with binding $Q$, and thus $\overline{S^3 \setminus N}$ is homeomorphic to $N$, so that $\Sigma$ is a Heegaard surface for $S^3$.

It remains to be seen that there is a core of $N$ containing $L_n'$, but it suffices to show that there is a pair $D_n$ and $D_n'$ of dualizing disks for $L_n'$ in $N$.  Note that for any properly embedded arc $a \subset F_0$, there is a compressing disk $D(a)$ for $N$ obtained by crushing the vertical boundary of the disk $a \X I \subset F \X I$.  Let $a_0$ and $a_0'$ be disjoint arcs embedded in $F_0$ such that $a_0$ meets $V_n$ once and avoids $V_n'$, and $a_0'$ meets $V_n'$ once and avoids $V_n$.  Then $D(a_0)$ and $D(a_0')$ are dualizing disks for $L_n'$, completing the proof.
\end{proof}

Lemma~\ref{lem:admiss} does more than simply prove $\Sigma$ is admissible; it provides the key ingredients we need to construct a diagram for $\Tt(L_n',\Sigma)$:  Let $a_1$ and $a_1'$ denote parallel copies of $a_0$ and $a_0'$, respectively, in $F_1$, so that $\pd D(a_0) = a_0 \cup a_1$ and $\pd D(a_0') = a_0' \cup a_1'$.  By Lemma~\ref{lem:constr}, there is a genus four trisection diagram $(\A,\n,\g)$ for $\Tt(L_n',\Sigma)$ so that
\[ \n_1 = \pd D(a_0) \qquad \n_2 = \pd D(a_0') \qquad  \g_1 = V_n \qquad \g_2 = V_n'. \]
Noting that $(\n,\g)$ defines a genus four splitting of $Y_2$, it follows that any curve disjoint from $\n_1 \cup \n_2 \cup \g_1 \cup \g_2$ that bounds a disk in either of $H_{\n}$ or $H_{\g}$ also bounds in the other handlebody.  Let $b_0$ and $b_0'$ denote non-isotopic disjoint arcs in $F_0$ that are disjoint from $a_0 \cup a_0' \cup L_n'$.  Then $b_0 \cup b_1$ and $b_0' \cup b_1'$ bound disks in $N$; thus letting
\[\n_3 = \g_3 = b_0 \cup b_1 \qquad \n_4 = \g _4 = b_0' \cup b_1',\]
we have that $(\n,\g)$ is a standard diagram, corresponding to two of the cut systems in a diagram for $\Tt(L_n,\Sigma)$.  To find the curves in $\A$, let $N' = \overline{S^3 \setminus N}$, and observe that $N'$ also has the structure of $F \X I$ crushed along its vertical boundary, and $\pd N' = \pd N = F_0 \cup F_1$.

\begin{figure}[h!]
	\centering
	\includegraphics[width=.9\textwidth]{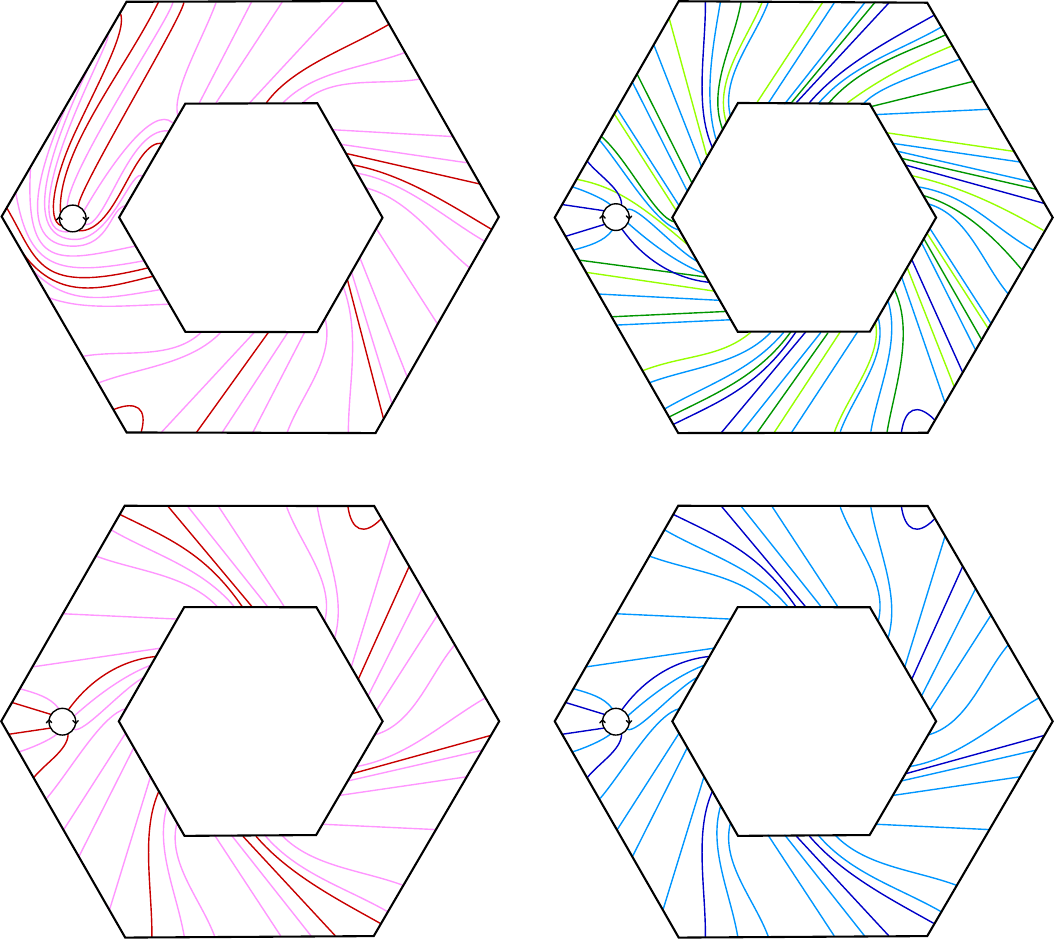}
	\caption{A trisection diagram for $\Tt(L_3',\Sigma)$.  The top row shows two copies of $F_0$, along with arcs: $\varphi(a_0)$ and $\varphi(a_0')$ (red), $\varphi(b_0)$ and $\varphi(b_0')$ (pink); $b_0$ and $b_0'$ (dark blue), $b_1$ and $b_1'$ (light blue), and $V_3$ (dark green) and $V_3'$ (light green).  The bottom row shows two copies of $F_1$, along with arcs: $a_1$ and $a_1'$ (red and dark blue) and $b_1$ and $b_1'$ (pink and light blue). The surfaces in the top row are identified with those in the bottom row along the oriented puncture.  Thus, each column describes the closed genus four surface $\Sigma$.  The left column encodes a 4--tuples of curves on this surface, namely, $\alpha$. The right column encodes the 4--tuple $\beta$ (shades of blue), as well as the two curves $\gamma_1$ and $\gamma_2$.  The trisection diagram for $\Tt(L_3',\Sigma)$ is obtained by overlaying the two columns. (Note that $\gamma_3 = \beta_3$ and $\gamma_4 = \beta_4$.)}
	\label{fig:GST}
\end{figure}

One way to reconstruct $S^3$ from $N$ and $N'$, both of which are homeomorphic to crushed products $F \X I$, is to initially glue $F_1 \subset \pd N'$ to $F_1 \subset \pd N'$.  The result of this initial gluing is again homeomorphic to a crushed product $F \X I$.  The second gluing then incorporates the monodromy, so that $F_0 \subset N'$ is glued to $F_0 \subset N$ via $\varphi$.  The result of this gluing is that if $a_1$ is an arc in $F_1 \subset N'$ and $D'(a_1)$ is the corresponding product disk in $N'$, then $\pd D'(a) = a_1 \cup \varphi(a_0)$, where $a_0$ is a parallel copy of $a_1$ in $F_0$ (using the product structure of $N$).

Thus, in order to find curves in $\A$, we can choose any four arcs in $F_1$ cutting the surface into a planar component and construct their product disks.  However, if we wish to a find a diagram with relatively little complication with respect to the $\n$ and $\g$ curves we have already chosen, it makes sense to choose those four arcs to be $a_1$, $a_1'$, $b_1$, and $b_1'$.  Thus,
\[ \A_1 = a_1 \cup \varphi(a_0) \qquad \A_2 = a_1' \cup \varphi(a_0') \qquad \A_3 = b_1 \cup \varphi(b_0) \qquad \A_4 = b_1' \cup \varphi(b_0').\]

We have proved the following:

\begin{proposition}
The triple $(\A,\n,\g)$ forms a $(4;0,2,2)$-trisection diagram for $\Tt(L_n,\Sigma)$.
\end{proposition}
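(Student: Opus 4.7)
The strategy is to verify that $(\alpha,\beta,\gamma)$ realizes the trisection diagram produced by Lemma~\ref{lem:constr} applied to the admissible pair $(L_n',\Sigma)$. The type $(4;0,2,2)$ follows immediately from the setup: $g(\Sigma)=4$ because $\Sigma=F_0\cup_Q F_1$ is two copies of the genus-two fiber $F$ glued along the single circle $Q$; the ambient manifold is $S^3$, forcing $k_1=0$; the link has $n=2$ components, so $k_2=g-n=2$; and surgery lands in $\#^2(S^1\X S^2)$, so $k_3=2$. Hence the work reduces to checking that each color is a cut system bounding compressing disks in the correct handlebody of the spine $N'\cup N\cup H_{L_n'}$, and then that the resulting Heegaard pair $(\beta,\gamma)$ is standard.

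For the $\beta$-system, the plan is to arrange the arcs $a_0,a_0',b_0,b_0'$ to be pairwise disjoint, to cut $F_0$ into an octagon, and to satisfy the dual intersection requirements with $V_n$ and $V_n'$ stated in Lemma~\ref{lem:admiss}. Given this, the product disks in $N=(F\X I)/(\pd F\X I)$ are pairwise disjoint and cut $N$ into a ball, so their boundaries on $\Sigma$ form a cut system for $N=H_{\n}$. For the $\alpha$-system, the parallel arcs $a_1,a_1',b_1,b_1'$ in $F_1$ cut $F_1$ into an octagon; applying the same construction in $N'$ and tracking boundaries through the two-stage gluing (identity on $F_1$, monodromy $\varphi$ on $F_0$) gives exactly the four curves of the form $a_1\cup\varphi(a_0)$ listed in the text, which therefore form a cut system for $N'=H_{\A}$.

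For $\gamma$, the handlebody $H_{L_n'}$ is obtained from $N$ by Dehn surgery on $L_n'$ along the surface framing induced by $\Sigma$; the meridians of the two re-filled solid tori are isotopic in $\Sigma$ to $V_n$ and $V_n'$, producing $\gamma_1,\gamma_2$. Because $b_0,b_0'$ were chosen disjoint from $L_n'$, the disks $D(b_0),D(b_0')$ survive into $H_{L_n'}$, so $\gamma_3=\beta_3$ and $\gamma_4=\beta_4$ remain compressing disks; together the four disks cut $H_{L_n'}$ into a ball, giving a cut system for $H_{\g}$. Finally, the Heegaard diagram $(\beta,\gamma)$ has the shape required by Lemma~\ref{lem:constr}: it shares the two curves $\beta_3=\gamma_3$ and $\beta_4=\gamma_4$, and on the remaining pairs $(\beta_1,\gamma_1),(\beta_2,\gamma_2)$ the prescribed intersections $a_0\cdot V_n=a_0'\cdot V_n'=1$ show that $(\n,\g)$ is standard for $Y_2$.

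The main obstacle I anticipate is the arc-configuration step: one must simultaneously arrange four pairwise disjoint arcs in the genus-two surface $F_0$ that cut it into a disk, have the prescribed single transverse intersections with the multicurve $V_n\cup V_n'$, and also keep $b_0,b_0'$ disjoint from $L_n'=V_n\cup V_n'$. The existence of such a configuration is a routine surface-topology exercise using the explicit picture from Figure~\ref{fig:Hexulus}, but it is the one step requiring genuine verification rather than bookkeeping; everything else follows from Lemma~\ref{lem:constr} and the setup of Lemma~\ref{lem:admiss}.
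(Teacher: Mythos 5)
Your proposal matches the paper's approach: the proposition is stated as a summary of the preceding construction, which assembles the diagram exactly as you describe from the product-disk structure of $N$ and $N'$, the two-stage gluing (identity on $F_1$, monodromy $\varphi$ on $F_0$), and Lemma~\ref{lem:constr}. Your closing observation is also apt — the paper does not argue abstractly that the required arc system exists but exhibits it concretely in Figure~\ref{fig:GST}, so the arc-configuration step you flag is precisely the content carried by the figure rather than by a written argument.
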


%%%%%%%%%%%%%%%%%%%%%%%%%%%%%%%%%%%%%%%%%%%%%%%%%%%%%%%%%%%%%%%%%%%%%%%%
%%%%%%%%%%%%%%%%%%%%%%%%%%%%%%%%%%%%%%%%%%%%%%%%%%%%%%%%%%%%%%%%%%%%%%%%
\section{A rectangle condition for trisection diagrams}\label{sec:rect}
%%%%%%%%%%%%%%%%%%%%%%%%%%%%%%%%%%%%%%%%%%%%%%%%%%%%%%%%%%%%%%%%%%%%%%%%
%%%%%%%%%%%%%%%%%%%%%%%%%%%%%%%%%%%%%%%%%%%%%%%%%%%%%%%%%%%%%%%%%%%%%%%%

In the final section, we introduce a tool for potential future use.  This tool is an adaptation of the Rectangle Condition for Heegaard diagrams of Casson and Gordon~\cite{Casson-Gordon_Reducing_1987} to the setting of trisection diagrams.  A collection of $3g-3$ pairwise disjoint and nonisotopic curves in a genus $g$ surface $\Sigma$ is called a \emph{pants decomposition}, as the curves cut $\Sigma$ into $2g-2$ thrice-punctured spheres, or pairs of pants.  A pants decomposition defines a handlebody in the same way a cut system does, although a cut system is a minimal collection of curves defining a handlebody, whereas a pants decomposition necessarily contains superfluous curves.  An \emph{extended Heegaard diagram} is a pair of pants decompositions $(\Aa,\Nn)$ determining a Heegaard splitting $H_{\Aa} \cup H_{\Nn}$.  An \emph{extended trisection diagram} is a triple of pants decompositions $(\Aa,\Nn,\Gg)$ determining the spine $H_{\Aa} \cup H_{\Nn} \cup H_{\Gg}$ of a trisection.

Suppose that $\Aa$ and $\Nn$ are pants decompositions of $\Sigma$, and let $P_{\Aa}$ be a component of $\Sigma \setminus \nu(\Aa)$ and $P_{\Nn}$ a component of $\Sigma \setminus \nu(\Nn)$.  Let $a_1$, $a_2$, and $a_3$ denote the boundary components of $P_{\Aa}$; $b_1$, $b_2$, and $b_3$ the boundary components of $P_{\Nn}$.  We say that the pair $(P_{\Aa},P_{\Nn})$ is \emph{saturated} if for all $i,j,k,l \in \{1,2,3\}$, $i\neq j$, $k \neq l$, the intersection $P_{\Aa} \cap P_{\Nn}$ contains a rectangle $R_{i,j,k,l}$ with boundary arcs contained in $a_i$, $b_k$, $a_j,$ and $b_l$.  We say that that pair of pants $P_{\Aa}$ is \emph{saturated with respect to $\Nn$} if for every component $P_{\Nn}$ of $\Sigma \setminus \nu(\Nn)$, the pair $(P_{\Aa},P_{\Nn})$ is saturated.

An extended Heegaard diagram $(\Aa,\Nn)$ satisfies the Rectangle Condition of Casson-Gordon if for every component $P_{\Aa}$ of $\Sigma \setminus \nu(\Aa)$, we have $P_{\Aa}$ is saturated with respect to $\Nn$.  Casson and Gordon proved the following.

\begin{theorem}[\cite{Casson-Gordon_Reducing_1987}]\label{thm:CG}
Suppose that an extended Heegaard diagram $(\Aa,\Nn)$ satisfies the Rectangle Condition.  Then the induced Heegaard splitting $H_{\Aa} \cup H_{\Nn}$ is irreducible.
\end{theorem}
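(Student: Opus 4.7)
The plan is to argue by contradiction. Suppose $(\Aa,\Nn)$ satisfies the Rectangle Condition yet the splitting $H_{\Aa}\cup H_{\Nn}$ is reducible. Then there is an essential simple closed curve $\delta\subset\Sigma$ bounding compressing disks $D_\alpha\subset H_{\Aa}$ and $D_\beta\subset H_{\Nn}$. I would choose such a $\delta$ so that, among all simple closed curves on $\Sigma$ bounding essential disks in both handlebodies, $\delta$ meets $\Aa\cup\Nn$ transversely and with minimal geometric intersection number.

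The structural input is that $\Aa$ consists of curves each bounding a disk in $H_{\Aa}$; these $3g-3$ pairwise disjoint disks cut $H_{\Aa}$ into $2g-2$ three-balls, one for each pair of pants $P_\alpha\subset\Sigma\setminus\nu(\Aa)$. Isotoping $D_\alpha$ to meet this disk system in minimal position decomposes $D_\alpha$ into subdisks, each living in one three-ball and with boundary alternating between arcs on the pants compressing disks and arcs of $\delta\cap P_\alpha$. The combinatorial heart of the argument is a lemma of Casson and Gordon: there exist a pair of pants $P_\alpha$ and two of its boundary components $a_i,a_j$ such that no arc of $\delta\cap P_\alpha$ joins $a_i$ to $a_j$. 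The idea behind the lemma is that if every connecting type appeared in every pair of pants, then an outermost-subdisk analysis inside one of the three-balls would produce a wave for $\delta$ --- an arc of $\delta\cap P_\alpha$ with both endpoints on a single curve of $\Aa$ and cobounding a disk in $H_{\Aa}$ with a subarc of that curve --- and the associated wave isotopy would strictly reduce $|\delta\cap\Aa|$, violating minimality. The same reasoning on the $\Nn$-side produces a pair of pants $P_\beta$ with an omitted connecting type $\{b_k,b_l\}$.

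With the omitted types in hand, I would invoke the Rectangle Condition for the pair $(P_\alpha,P_\beta)$ and the index choice $(i,j,k,l)$ to obtain a rectangle $R\subset P_\alpha\cap P_\beta$ whose boundary arcs lie cyclically on $a_i,b_k,a_j,b_l$. The two sides of $\partial R$ contained in $b_k\cup b_l$ are properly embedded arcs in $P_\alpha$ connecting $a_i$ to $a_j$, and symmetrically for $P_\beta$. Using $R$ as a band, I would surger one of the compressing disks: for instance, band a suitable arc of $D_\alpha\cap P_\alpha$ together with $R$ to produce either a new compressing disk in $H_{\Aa}$ whose boundary contains an arc of the forbidden connecting type in $P_\alpha$, or a new essential curve intersecting $\Aa\cup\Nn$ strictly fewer times than $\delta$. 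Either outcome contradicts the omitted-type lemma or the minimality of $\delta$.

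The main obstacle is proving the Casson-Gordon combinatorial lemma. This requires a delicate outermost-arc analysis inside each three-ball, controlling how the subdisks of $D_\alpha$ meet the distinguished disk system, and extracting a wave disk whenever all three connecting types appear in every $P_\alpha$. Once that lemma is established, the application of the Rectangle Condition via the band-surgery step to complete the contradiction is largely formal.
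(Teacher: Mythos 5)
The paper does not actually prove this theorem---it is cited directly from Casson and Gordon---but the paper's proof of the trisection Proposition later in the section contains exactly the relevant argument restricted to two handlebodies, so a comparison is still worthwhile.

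Your ``omitted connecting type'' lemma is correct, and you have identified the right mechanism, but you are making it harder than it is. Once $D_\alpha$ has been isotoped rel $\delta$ to meet the pants disk system $D_{\Aa}$ minimally, an arc of $D_\alpha\cap D_{\Aa}$ outermost in $D_\alpha$ cuts off a subdisk whose boundary contributes an essential arc $\delta'\subset\delta$ lying in a single pair of pants $P_\alpha$ with both endpoints on one boundary curve, say $a_k$: a wave. Such a wave separates the other two boundary curves $a_i$ and $a_j$ of $P_\alpha$, and since $\delta$ is embedded no arc of $\delta\cap P_\alpha$ can cross $\delta'$, hence none joins $a_i$ to $a_j$. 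That is the entire lemma; no global minimization of $|\delta\cap(\Aa\cup\Nn)|$ over all reducing curves is needed, only minimal position of each disk with the corresponding disk system, which is a harmless isotopy.

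The genuine gap is the closing step. Your ``band surgery'' maneuver is too vague to verify: it is not specified what disk or curve the band produces, nor why the result contradicts either minimality or the omitted-type lemma, and I do not see a clean way to make it go through. The argument the paper uses in the trisection Proposition (and which applies verbatim here with the third handlebody deleted) dispenses with banding altogether. The Rectangle Condition gives a rectangle $R\subset P_\alpha\cap P_\beta$ with one pair of opposite sides on $a_i,a_j$ and the other pair on $b_k,b_l$. The wave $\delta'$ is disjoint from $a_i$ and $a_j$, so the arcs of $\delta'\cap R$ meet $\partial R$ only on the $b_k$- and $b_l$-sides; since $\delta'$ separates $a_i$ from $a_j$ in $P_\alpha$, at least one such arc must run from the $b_k$-side to the $b_l$-side. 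Symmetrically the wave $\delta''\subset\delta$ coming from $D_\beta\cap D_{\Nn}$ contributes an arc of $\delta''\cap R$ running from the $a_i$-side to the $a_j$-side. Two such arcs in a rectangle necessarily intersect, so $\delta'\cap\delta''\neq\emp$, which contradicts the embeddedness of $\delta$. Replacing your band-surgery step with this direct ``waves collide in $R$'' observation closes the argument.
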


Now, let $(\Aa,\Nn,\Gg)$ be an extended trisection diagram.  We say that $(\Aa,\Nn,\Gg)$ satisfies the \emph{Rectangle Condition} if for every component $P_{\Aa}$ of $\Sigma \setminus \nu(\Aa)$, we have that either $P_{\Aa}$ is saturated with respect to $\Nn$ or $P_{\Aa}$ is saturated with respect to $\Gg$.

\begin{proposition}
Suppose that an extended trisection diagram satisfies the Rectangle Condition.  Then the induced trisection $\Tt$ with spine $H_{\Aa} \cup H_{\Nn} \cup H_{\Gg}$ is irreducible.
\end{proposition}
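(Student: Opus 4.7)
The plan is to mimic the proof of Theorem~\ref{thm:CG}, using the trisection Rectangle Condition to select between the compressing disks in $H_{\Nn}$ and $H_{\Gg}$ according to which pair of pants of $\Sigma \setminus \nu(\Aa)$ contains the extracted wave. Suppose for contradiction that $\Tt$ is reducible; then there is an essential simple closed curve $\delta \subset \Sigma$ bounding compressing disks $D_{\Aa}, D_{\Nn}, D_{\Gg}$ in the three handlebodies. I will isotope $\delta$ (and the three bounding disks) to minimize $|\delta \cap (\Aa \cup \Nn \cup \Gg)|$ among curves bounding disks in all three handlebodies.

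Next, the standard outermost-disk argument applied to $D_{\Aa}$, using a complete meridian system for $H_{\Aa}$ bounded by the curves of $\Aa$, produces a \emph{wave} $w_{\Aa}$: a subarc of $\delta$ lying in a single pair of pants $P_{\Aa}$ of $\Sigma \setminus \nu(\Aa)$ with both endpoints on one boundary curve $a_i$. Minimality of $|\delta \cap \Aa|$ forces $w_{\Aa}$ to be essential in $P_{\Aa}$ (otherwise $w_{\Aa}$ would cut off a disk in $P_{\Aa}$ across which $\delta$ could be pushed), so $w_{\Aa}$ separates the other two boundary curves $a_j$ and $a_k$, where $\{i,j,k\}=\{1,2,3\}$. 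By the trisection Rectangle Condition, $P_{\Aa}$ is saturated with respect to either $\Nn$ or $\Gg$; without loss of generality assume the former. Applying the same outermost-disk argument to $D_{\Nn}$ yields a wave $w_{\Nn}$ in some $P_{\Nn}$ of $\Sigma \setminus \nu(\Nn)$, with both endpoints on a boundary curve $b_l$ and separating $b_m$ from $b_n$.

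Saturation of $(P_{\Aa},P_{\Nn})$ yields a rectangle $R \subset P_{\Aa} \cap P_{\Nn}$ with horizontal sides on $a_j,a_k$ and vertical sides on $b_m,b_n$. Since $w_{\Aa}$ separates $a_j$ from $a_k$ in $P_{\Aa}$ and is disjoint from $\Aa$ in its interior, some arc of $w_{\Aa} \cap R$ must run from $b_m$ to $b_n$ across $R$; symmetrically, some arc of $w_{\Nn} \cap R$ runs from $a_j$ to $a_k$. Any such pair of arcs in a rectangle, joining opposite sides, must intersect transversely in $R$, so $w_{\Aa}$ and $w_{\Nn}$ meet in $\Sigma$. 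Since both are subarcs of the simple closed curve $\delta$, this produces a self-intersection of $\delta$, the desired contradiction. The main technical obstacle is handling the degenerate configurations: the case $|\delta \cap \Aa|=0$, in which $\delta$ is already isotopic to a curve of $\Aa$ and the wave extraction breaks down, and the case $w_{\Aa}=w_{\Nn}$, in which the identification $\alpha_i=\beta_l$ makes the ``intersection'' an endpoint rather than a transverse crossing. Both are addressed by further minimization, by selecting a different outermost subdisk of $D_{\Aa}$ or $D_{\Nn}$, or by applying the Rectangle Condition to an adjacent pair of pants, as in the original Casson-Gordon treatment of these edge cases.
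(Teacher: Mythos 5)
Your generic case is sound and matches the heart of the paper's argument: with $\delta$ bounding compressing disks in all three handlebodies, extract a wave $w_{\Aa}\subset\delta$ inside some $P_{\Aa}$ from an outermost intersection arc of $D_{\Aa}$ with the $\Aa$-disks, use the trisection Rectangle Condition to decide whether to extract the second wave from the $\Nn$- or $\Gg$-side, and then the rectangle $R\subset P_{\Aa}\cap P_{\Nn}$ spanning $a_j,a_k,b_m,b_n$ forces the two waves to cross, contradicting embeddedness of $\delta$. The derivation that $w_{\Aa}\cap R$ contains an essential arc from $b_m$ to $b_n$ and symmetrically for $w_{\Nn}$ is exactly right.

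The gap is in the degenerate cases, which you dismiss with a hand-wave to ``the original Casson--Gordon treatment.'' That treatment does not transfer, because the trisection Rectangle Condition as defined here is \emph{asymmetric}: it requires each $P_{\Aa}$ to be saturated with respect to $\Nn$ \emph{or} $\Gg$, but imposes nothing on the pants of $\Sigma\setminus\nu(\Nn)$ or $\Sigma\setminus\nu(\Gg)$ directly. You correctly flag $|\delta\cap\Aa|=0$ (i.e.\ $\delta$ parallel to an $\Aa$-curve), and the paper handles this by noting that such a $\delta$ must, by saturation, meet every curve of $\Nn$ (or $\Gg$), after which the wave from $D_{\Nn}$ can be extracted and compared against a rectangle having $\delta$ itself as one of its two $\Aa$-sides. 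But you do not address the genuinely harder case where $\delta$ is parallel to a curve of $\Nn$ (or $\Gg$). There the wave extraction on the $\Nn$-side fails, and since the hypothesis constrains only the $\Aa$-pants, no immediate symmetric argument is available. The paper resolves this with an additional idea that does not appear in your sketch: observe that $\delta$ bounds in both $H_{\Aa}$ and $H_{\Gg}$, so the Heegaard splitting $(\Aa,\Gg)$ is reducible, and therefore by the contrapositive of the original Casson--Gordon theorem some $P_{\Aa}$ is \emph{not} saturated with respect to $\Gg$; the trisection Rectangle Condition then forces that $P_{\Aa}$ to be saturated with respect to $\Nn$, and one argues as in the first degenerate case with the roles of $\Aa$ and $\Nn$ exchanged. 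Your other edge case, $w_{\Aa}=w_{\Nn}$, is not one the paper needs to treat separately (the two waves are components of $\delta\cap P_{\Aa}$ and $\delta\cap P_{\Nn}$ with endpoints on distinct curve families, so they cannot literally coincide once intersections are minimized), and invoking it suggests a misdiagnosis of where the real difficulty lies. To complete the proof you would need to spell out the $\delta\in\Nn$ (and $\delta\in\Gg$) case along the lines above.
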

\begin{proof}
Suppose by way of contradiction that $\Tt$ is reducible.  Then there exists a curve $\delta \subset \Sigma = \pd H_{\Aa}$ that bounds disks $D_1 \subset H_{\Aa}$, $D_2 \subset H_{\Nn}$, and $D_3 \subset H_{\Gg}$.  Let $D_{\Aa}$ denote the set of $3g-3$ disks in $H_{\Aa}$ bounded by the curves $\Aa$, and define $D_{\Nn}$ and $D_{\Gg}$ similarly.  There are several cases to consider.  First, suppose that $\delta \in \Aa$, so that $D_1 \in D_{\Aa}$, and let $P_{\Aa}$ be a component of $\Sigma \setminus \nu(\Aa)$ that contains $\delta$ as a boundary component.  Suppose without loss of generality that $P_{\Aa}$ is saturated with respect to $\Nn$.  Then, for any curve $b \in \Nn$, we have that $b$ is the boundary of a component $P_{\Nn}$ of $\Sigma \setminus \nu(\Nn)$, where $P_{\Aa} \cap P_{\Nn}$ contains a rectangle with boundary arcs in $\delta$ and $b$.  It follows that $\delta$ meets every curve $b \in \Nn$, so $\delta \notin \Nn$.

Suppose that $D_2$ and $D_{\Nn}$ have been isotoped to intersect minimally, so that these disks meet in arcs by a standard argument.  There must be an outermost arc of intersection in $D_2$, which bounds a subdisk of $D_2$ with an arc $\delta' \subset \delta$, and $\delta'$ is a \emph{wave} (an arc with both endpoints on the same boundary curve) contained a single component $P_{\Nn}$ of $\Sigma \setminus \nu(\Nn)$.  Let $b_1$ and $b_2$ be the boundary components of $P_{\Nn}$ disjoint from $\delta'$.  Since $P_{\Aa}$ is saturated with respect to $\Nn$, there is rectangle $R \subset P_{\Aa} \cap P_{\Nn}$ with boundary arcs contained in $b_1$, $\delta$, $b_2$, and some other curve in $\pd P_{\Aa}$.  Let $\delta''$ be the arc component of $\pd R$ contained in $\delta$.  Since the wave $\delta'$ separates $b_1$ from $b_2$ in $P_{\Nn}$, it follows that $\delta' \cap \delta'' \neq \emp$, a contradiction.

In the second case, suppose that $\delta$ is a curve in $\Nn$.  Note that the Heegaard splitting determined by $(\Aa,\Gg)$ is reducible, and thus by the contrapositive of Casson-Gordon's rectangle condition, there must be some pants decomposition $P_{\Aa}$ of $\Sigma \setminus \nu(\Aa)$ such that $P_{\Aa}$ is \textbf{not} saturated with respect to $\Gg$, so that $P_{\Aa}$ is saturated with respect to $\beta$.  Let $P_{\Nn}$ be a component of $\Sigma \setminus \nu(\Nn)$ that contains $\delta$ as boundary component.  By the above argument, $\delta \notin \Aa$, and if we intersect $D_1$ with $D_{\Aa}$ we can run an argument parallel to the one above to show that $\delta$ has a self-intersection, a contradiction.  A parallel argument shows that $\delta \notin \Gg$.

Finally, suppose that $\delta$ is not contained in any of $\Aa$, $\Nn$, or $\Gg$.  By intersecting the disks $D_1$ and $D_{\Aa}$, we see that there is a wave $\delta' \subset \delta$ contained in some pants component $P_{\Aa}$ of $\Sigma \setminus \nu(\Aa)$.  Suppose without loss of generality that $P_{\Aa}$ is saturated with respect to $\Nn$.  By intersecting $D_2$ with $D_{\Nn}$, we see that there is a wave $\delta'' \subset \delta$ contained in some pants component $P_{\Nn}$ of $\Sigma \setminus \nu(\Nn)$.  Let $a_1$ and $a_2$ be the components of $\pd P_{\Aa}$ that avoid $\delta'$, and let $b_1$ and $b_2$ be the components of $\pd P_{\Nn}$ that avoid $\delta''$.  By the Rectangle Condition, $P_{\Aa} \cap P_{\Nn}$ contains a rectangle $R$ whose boundary is made of arcs in $a_1$, $b_1$, $a_2$, and $b_2$.   As such, $\delta' \cap R$ contains an arc connecting $b_1$ to $b_2$, while $\delta'' \cap R$ contains an arc connecting $a_1$ to $a_2$, but this implies that $\delta' \cap \delta'' \neq \emp$, a contradiction.  We conclude that no such curve $\delta$ exists.
\end{proof}

Of course, at this time, the Rectangle Condition is a tool without an application, which elicits the following question:

\begin{question}
Is there an extended trisection diagram $(\Aa,\Nn,\Gg)$ that satisfies the Rectangle Condition?
\end{question}

Note that while it is easy to find three pants decompositions that satisfy the Rectangle Condition, the difficulty lies in finding three such pants decompositions which also determine a trisection; in pairs, they must be extended Heegaard diagrams for the 3-manifolds $Y_k$.

\bibliographystyle{amsalpha}
\bibliography{MasterBibliography_2017_06}

\end{document}